\newcommand{\tend}[3][]{\xrightarrow[#2\to#3]{#1}}
\newcommand{\setdef}{\stackrel {\rm {def}}{=}}
\newcommand{\ds}{\displaystyle}
\newcommand{\N}{\mathbb N}
\newcommand{\Z}{\mathbb Z}
\newcommand{\C}{\mathbb C}
\newcommand{\Q}{\mathbb Q}
\newcommand{\R}{\mathbb R}
\newcommand{\E}{\mathbb E}
\newcommand{\T}{\mathbb T}
\newcommand{\Rea}{\textrm{Re}}
\newcommand{\dprime}{\prime\prime}
\title{Mixing sequences for non-mixing transformations and group actions}
\author{\MakeLowercase{el} Houcein \MakeLowercase{el} Abdalaoui}
\address{University of Rouen Normandy
	Department of Mathematics, LMRS  UMR 60 85 CNRS\\
	Avenue de l'Universit\'e, BP.12
	76801 Saint Etienne du Rouvray - France .}
\email{elhoucein.elabdalaoui@univ-rouen.fr}
\urladdr{http://www.univ-rouen.fr/LMRS/Persopage/Elabdalaoui/}
\author{Terry Adams}
\address{Department of Mathematics and Statistics\\
	State University of New York
	Albany, NY 12222 }
\email{terry@ieee.org}
\date{May 2021}
\dedicatory{\textbf{Dedicated to the 80th Anniversary of Professor Jean-Paul Thouvenot}}
\newtheorem{thm}{Theorem}[section]
\newtheorem{prop}{Proposition}[section]
\newtheorem{lem}{Lemma}[section]
\newtheorem{cor}{Corollary}[section]
\newtheorem{question}{Question}[section]
\newtheorem{defn}{Definitions}[section]
\newtheorem{rem}{Remark}[section]
\begin{document}
	\begin{abstract} 
		We establish that there are non-mixing maps that are mixing on appropriate sequences including sequences $(s_i)$ which satisfy the Rajchman dissociated property. Our examples are based on the staircase rank one construction, $M$-towers constructions and the Gaussian transformations.  As a consequence, we obtain there are non-mixing maps which are mixing along the squares. We further prove that a sequence $M=(m_n)$ is a mixing sequence for some weak mixing ${1}/{2}$-rigid transformation $T$ if and only if 
		the complement of $M$ is a thick set. This result is generalized to ${r}/{(r+1)}$-rigid transformations for $r\in \N$.  Moreover, by applying Host-Parreau characterization of the set of continuity from Harmonic Analysis, we extend our results to the infinite countable abelian group actions.      
	\end{abstract}

\keywords{group action, Mixing, thick set,  Gaussian maps, $\alpha$-rigid, Riesz products, Rajchman property, Rajchamn measures, set of continuity, thin sets}
\subjclass[2020]{Primary: 37A25, 37A30, Secondary: 43A25, 42A55}

\maketitle

\section{Introduction}

The purpose of this paper is to bring a negative answer to the following question asked by V. Bergleson and al. \cite{B}.
\begin{question}
	Does mixing along the squares imply mixing?
\end{question} 
This question seems to be related to the mixing of height order for countable and  amenable group action. In fact, we give a complete characterization for a sequence $s_i$ of natural numbers to be a mixing sequence for some partially rigid, weak mixing transformation.  Note, if $s_i$ contains bounded gaps, and a probability preserving transformation $T$ is mixing on $s_i$, then $T$ must be strongly mixing.  We show that if the sequence $s_i$ does not contain bounded gaps, then there exists a non-mixing $T$ which is mixing on $s_i$.  We will further establish that the subset $M$ of integers for which there is a non-mixing map which is mixing on it should have the Rajchman dissociated property, that is , its complement contain a shift of a 
some set $\Big\{\ds\sum \epsilon_jn_j,  \epsilon_j\in \{\pm 1,0\},  \ds \sum |\epsilon_j|<\infty\Big\}$, where $(n_j)$ is a dissociated sequence (a sequence $(n_j)$ is a dissociated sequence if  $\ds n_j >2 \sum_{i=1}^{j-1}n_i,$ for each $j \geq 1$).  Let us notice that this class contain a subsets $M$ of density zero. Moreover, it is well known  that if $S$ is a Sidon set and $T$ is mixing on its complement then it $T$ must be mixing. This later result and much more is well known in harmonic analysis. In fact, Host-Parreau proved that if a set $M$ does not have the Rajchman dissociated property then if $T$ is mixing on its complement then $T$ should be mixing (in the Harmonic Analysis language, this result is stated as a characterization of the so-called sets of continuity and sets of Rajchman.). 
For more details we refer to section \ref{Gaussian}. We further present an extension of our results to the case of countable abelian group actions in section \ref{Groupe}.  Our contributions improve and generalize that of V. V. Ryzhikov \cite{R-rigid-m} to Bergelson's question. 

\section{Mixing sequences and ${1}/{2}$-rigid transformations}\label{vs-staircase-alpha}
In this section, we prove the following. 

\begin{thm}
	\label{v-staircase-thm}
	Given an increasing sequence of natural numbers 
	$M = \{ m_n \in \N: n\in \N \}$, 
	there exists a weak mixing ${1}/{2}$-rigid 
	transformation $T$ that is mixing on $M$ if and only if 
	the complement of $M$ is a thick set. 
\end{thm}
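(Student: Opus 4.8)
\emph{Proof sketch.}

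\textbf{The ``only if'' part.} This uses only the observation from the Introduction that a transformation which is mixing on a set with bounded gaps is mixing. Suppose $T$ is weak mixing, $1/2$-rigid, and mixing on $M$, and suppose for contradiction that $M^c$ is not thick; then every block of some fixed length $L$ of consecutive integers meets $M$. Given measurable $A,B$ and $n$ large, choose $m=m(n)\in M$ with $n\le m<n+L$ and write $\mu(T^nA\cap B)=\mu(T^mA\cap T^{m-n}B)$. Since $m\to\infty$ as $n\to\infty$ while $m-n$ stays in the finite set $\{0,\dots,L-1\}$, and for each fixed $j$ there mixing on $M$ gives $\mu(T^mA\cap T^jB)\to\mu(A)\mu(T^jB)=\mu(A)\mu(B)$, we conclude $\mu(T^nA\cap B)\to\mu(A)\mu(B)$, i.e.\ $T$ is mixing. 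But a $1/2$-rigid $T$ has a sequence $n_i\to\infty$ with $\mu(T^{n_i}A\cap A)\to\tfrac12\mu(A)+\tfrac12\mu(A)^2$, which exceeds $\mu(A)^2$ for $0<\mu(A)<1$; this contradicts mixing, so $M^c$ must be thick.

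\textbf{The ``if'' part: the construction.} Assume $M^c$ is thick, so $M$ has arbitrarily long gaps. I build $T$ by cutting and stacking, as a rank one transformation defined by towers $C_k$ of height $h_k$, where $C_k$ is cut into $r_k$ subcolumns and $s_k$ spacers are inserted to form $C_{k+1}$, so $h_{k+1}=r_kh_k+s_k$; rank one gives ergodicity, and a mild aperiodicity condition on the spacers gives weak mixing. The stages are of two kinds. At a \emph{mixing stage} I use staircase spacers (roughly $j$ spacers above the $j$-th copy of $C_k$) with $r_k$ growing slowly relative to $h_k$; the classical staircase estimates then give $\mu(T^mA\cap B)\to\mu(A)\mu(B)$ as $m\to\infty$ through the ranges $[h_k,h_{k+1})$ of mixing stages, because the staircase spacers make the column displacement effected by $T^m$ vary over the subcolumns and so equidistribute. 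At a (rare) \emph{rigidity stage} $k=k_i$ I instead install a spacer pattern carrying a single planted ``resonance'': a carefully chosen time $n_i$ that acts as a near-rigid motion on a set of measure tending to $\tfrac12$, while the staircase stages that follow spread the complementary half uniformly; this produces $\mu(T^{n_i}A\cap A)\to\tfrac12\mu(A)+\tfrac12\mu(A)^2$, a relation locked in after stage $k_i+1$, and the remaining spacers of the stage stay staircase-like so that $T^m$ still equidistributes for $m\in[h_{k_i},h_{k_i+1})\setminus W_i$, where $W_i$ is a narrow window around $n_i$.

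\textbf{The ``if'' part: scheduling.} The mixing defect created at stage $k_i$ is confined to $W_i$, so the scheme works provided $W_i\subseteq M^c$; note $W_i$ certainly contains $n_i+j$ for bounded $j$ (there $T^{n_i+j}\to\tfrac12 T^j+\tfrac12\Theta$, $\Theta$ the projection onto the constants, which is not the mixing limit), so we need a gap of $M$ around $n_i$ whose length dominates the width of $W_i$. Thus the schedule is: after stage $k_i$ carry out mixing stages; then, using thickness, choose a gap $[a_{i+1},b_{i+1}]\subseteq M^c$ far out and long enough that $b_{i+1}-a_{i+1}$ exceeds the eventual width of $W_{i+1}$ (controlled by the current height, which we keep from growing too fast); set $n_{i+1}$ near the centre of this gap, equal to the resonant time of the next rigidity stage; and continue. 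Then every $m\in M$ lies, for the unique stage $k$ with $h_k\le m<h_{k+1}$, either in the range of a mixing stage or in $[h_{k_i},h_{k_i+1})\setminus W_i$, whence $\mu(T^mA\cap B)\to\mu(A)\mu(B)$ as $m\to\infty$ in $M$; meanwhile later stages change the measure of the ``$\tfrac12$-rigid set'' of $C_{k_i}$ by only $o(1)$, so $T$ remains $1/2$-rigid, and it is weak mixing by construction.

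\textbf{The main obstacle.} The heart of the proof is engineering the rigidity stage: its spacer pattern must simultaneously (i) be staircase-like enough that mixing holds for all $m$ in the stage's range except inside a controllably narrow window around the planted time $n_i$ --- in particular mixing must persist at ``far'' shifts such as $2n_i,3n_i,\dots$ (which is possible precisely because weak operator convergence $T^{n_i}\to\tfrac12 I+\tfrac12\Theta$ imposes nothing on $T^{2n_i}$), and (ii) genuinely force $\mu(T^{n_i}A\cap A)\to\tfrac12\mu(A)+\tfrac12\mu(A)^2$, which requires proving that the ``half'' not fixed by the near-rigid motion is equidistributed by the subsequent staircase stages. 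Reconciling (i) and (ii), fitting the resulting windows $W_i$ inside prescribed long gaps of $M$, and keeping the two interleaved schedules (sparse rigidity stages, dense mixing stages covering all of $M$) compatible while preserving weak mixing, is where the work lies; the remaining estimates are standard rank one bookkeeping. One may equally run this through an $M$-towers construction, with the towers indexed directly by the gap structure of $M^c$.
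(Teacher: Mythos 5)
Your architecture coincides with the paper's. The ``only if'' direction is exactly the paper's Proposition \ref{prop-2}: mixing on a syndetic set forces strong mixing, which a $1/2$-rigid map cannot have (for $1/2$-rigidity you only need the lower bound $\limsup\mu(T^{n_i}A\cap A)\ge\tfrac12\mu(A)>\mu(A)^2$ for small $A$; the exact limit $\tfrac12\mu(A)+\tfrac12\mu(A)^2$ you assert is what the cut-in-half construction produces but is not part of the definition). The ``if'' direction is also the paper's construction: a staircase rank-one map whose mixing stages are interrupted by rare ``cut the column in half and stack with no spacer'' stages, each planted at a time lying in the middle of a long gap of $M$ supplied by thickness, with the constant $1/2$ being precisely what leaves $T^{2n_i},T^{3n_i},\dots$ free to mix (compare Proposition \ref{half}, which shows $1/2$ is sharp). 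Your window $W_i$ is the paper's excluded interval $[\mathcal{H}_{n_i}-L_i,\mathcal{H}_{n_i}+L_i]$, and your scheduling matches the paper's $M_1/M_2$ verification.

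The one step your sketch would not survive as written is ``set $n_{i+1}$ near the centre of this gap, equal to the resonant time of the next rigidity stage.'' The resonant time of a cut-in-half stage must be a column height, and column heights are rigidly determined by the recursion $h_{k+1}=r_kh_k+s_k$ with $h_{k+1}\ge 2h_k$; they cannot be placed at will, and thickness only guarantees that long gaps exist \emph{somewhere}, not where your heights happen to land. The naive repair --- pad the top of the column with spacers until the height reaches the chosen $k\in M^c$ --- may require adding on the order of $h_k$ spacers, i.e.\ a non-summable fraction of the measure, which destroys finiteness of the invariant measure. The paper's Lemma \ref{v-s-lem1} is the missing device: by toggling finitely many cut parameters $r_j\mapsto r_j+1$ one steers the height to within relative error $\tfrac{1}{r_j}+\tfrac{1}{h_j}$ \emph{below} $k$, which is summable over the rigidity stages, after which a small spacer padding hits $k$ exactly; one must then also check (the paper's alternating ``uniform Cesaro'' and ``number approximation'' substages) that these perturbations of the cut sequence do not break the staircase mixing estimates. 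Apart from this height-targeting mechanism, your outline is the paper's proof.
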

\noindent{}We recall
\begin{defn}
	A set $K \subset \N$ is thick if for each $\ell \in \N$, there exists 
	$k\in \N$ such that 
	\[
	\{ k + i : -\ell \leq i \leq \ell \} \subset K . 
	\]
\end{defn}
Note, a set $K$ is thick if and only if its complement is not syndetic.  
\begin{cor}
There exist non-mixing transformations that are mixing along the squares 
$n^2$, $n \in \N$. 
\end{cor}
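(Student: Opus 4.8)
The plan is to derive the corollary as an immediate consequence of Theorem \ref{v-staircase-thm}, applied to the sequence of squares $M = \{ n^2 : n \in \N\}$. By the theorem, it suffices to verify that the complement of $M$ in $\N$ is a thick set; then the theorem produces a weak mixing $1/2$-rigid transformation $T$ that is mixing on $M$, and since $T$ is $1/2$-rigid it is in particular not strongly mixing (a strongly mixing transformation cannot be $\alpha$-rigid for any $\alpha>0$), which gives the desired non-mixing example.

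So the one thing to check is thickness of $\N \setminus \{n^2 : n\in\N\}$. I would argue directly from the definition: given $\ell \in \N$, I need a block of $2\ell+1$ consecutive integers containing no perfect square. Since consecutive squares $n^2$ and $(n+1)^2$ differ by $2n+1$, which tends to infinity, for all sufficiently large $n$ the open interval $(n^2, (n+1)^2)$ has length $2n+1 > 2\ell+1$ and hence contains a run of $2\ell+1$ consecutive non-squares; explicitly one may take $k = n^2 + \ell + 1$ for any $n$ with $2n+1 > 2\ell+2$, so that $\{k-\ell,\dots,k+\ell\} \subset (n^2,(n+1)^2)$ consists entirely of non-squares. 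This shows $\N \setminus M$ is thick.

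There is really no obstacle here; the corollary is a straightforward specialization. The only mild subtlety worth spelling out is the remark, already noted in the paper's introduction, that "mixing on a sequence with bounded gaps forces strong mixing" — but that is the \emph{converse} direction and is not needed: we are using the forward direction of the theorem, and we only need that $1/2$-rigidity precludes strong mixing, which is elementary (if $T^{n_k}\to \tfrac12 I + \tfrac12 S$ weakly along some $n_k\to\infty$, then $T$ cannot satisfy $T^n\to 0$ in the weak operator topology on the orthocomplement of the constants). Thus the full proof is: quote the theorem for $M=\{n^2\}$, check thickness of the complement as above, and observe weak mixing plus $1/2$-rigidity yields a non-mixing map mixing along the squares.
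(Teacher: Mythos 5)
Your proposal is correct and is exactly the argument the paper intends: the corollary is stated as an immediate consequence of Theorem \ref{v-staircase-thm}, using that the gaps $2n+1$ between consecutive squares tend to infinity (so the complement of $\{n^2\}$ is thick) and that a $1/2$-rigid transformation cannot be strongly mixing. Your thickness verification and the rigidity-versus-mixing observation are both sound, so nothing further is needed.
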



\begin{cor}
Let $K$ be a thick subset of the natural numbers.  
There exists a $\frac{1}{2}$-rigid weak mixing transformation 
$T$ that is mixing on the complement of $K$.  
\end{cor}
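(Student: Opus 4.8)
The plan is to obtain this corollary directly from Theorem~\ref{v-staircase-thm} by taking the sequence $M$ in that theorem to be the complement of $K$. If $K$ happens to be cofinite (for instance $K=\N$), the claim is trivial: then $\N\setminus K$ is finite, so ``mixing on $\N\setminus K$'' holds vacuously, and one only needs the existence of a single weak mixing $\tfrac12$-rigid transformation --- e.g.\ one of the staircase rank one maps underlying the proof of Theorem~\ref{v-staircase-thm}, or the transformation that theorem produces when the complement is all of $\N$, which is thick.

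Otherwise $K$ is co-infinite; set $M \egdef \N\setminus K$ and write it in increasing order as $M=\{m_n : n\in\N\}$. Its complement in $\N$ is exactly $K$, which is thick by hypothesis, so the ``if'' direction of Theorem~\ref{v-staircase-thm} applies and delivers a weak mixing $\tfrac12$-rigid transformation $T$ that is mixing on $M$, i.e.\ on the complement of $K$. That $T$ is the required transformation. It is moreover non-mixing, since $\tfrac12$-rigidity forbids strong mixing (test against a set of measure below $\tfrac12$), which is consistent with the previous corollary.

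The only step needing attention is the matching of hypotheses with Theorem~\ref{v-staircase-thm}: checking that $M=\N\setminus K$ is indeed an infinite increasing sequence of natural numbers and that its complement is literally the given set $K$, so that ``complement of $M$ is a thick set'' is precisely the assumed thickness of $K$. This is pure bookkeeping --- there is no genuine obstacle, and all the content of the statement is carried by Theorem~\ref{v-staircase-thm}.
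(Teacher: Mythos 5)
Your proposal is correct and is exactly the route the paper intends: the corollary is stated without proof as an immediate consequence of Theorem~\ref{v-staircase-thm} applied to $M=\N\setminus K$, whose complement is the thick set $K$. Your extra remark on the cofinite/finite-$M$ edge case is harmless bookkeeping that the paper leaves implicit.
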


The partial rigidity constant $\frac{1}{2}$ is the maximum 
rigidity constant for which the previous corollary holds.  
In particular, it is not difficult to prove the following 
proposition.  We provide its proof for the convenience of the reader.

\begin{prop}\label{half}
There exists a thick subset $K$ of the natural numbers 
such that if $T$ is any invertible measure preserving 
$\alpha$-rigid transformation with $\alpha > {1}/{2}$, 
then $T$ is not mixing on the complement of $K$. 
\end{prop}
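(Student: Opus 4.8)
The plan is to construct an explicit thick set $K$ whose complement is sparse enough that no $\alpha$-rigid transformation with $\alpha > 1/2$ can be mixing along it, by exploiting the incompatibility between rigidity and mixing on a set that "sees" the rigidity times. Recall that $T$ being $\alpha$-rigid means there is a sequence $(r_k) \to \infty$ with $\mu(T^{r_k} A \cap A) \to c_A \geq \alpha \mu(A)$ for every measurable $A$ (more precisely, $\liminf_k \mu(T^{r_k}A \cap A) \geq \alpha\mu(A)$), while $T$ mixing on a set $M$ means $\mu(T^{m}A \cap B) \to \mu(A)\mu(B)$ as $m \to \infty$ through $M$. The key arithmetic observation is: if for every rigidity time $r_k$ we can find $m \in M$ with $|m - r_k|$ bounded (say $\leq \ell$ for the relevant level of the rigidity approximation), then along that subsequence $\mu(T^m A \cap A)$ would both have to converge to $\mu(A)^2$ (mixing on $M$) and stay close to $\alpha\mu(A)$ — contradiction once $\alpha > 1/2$ and $\mu(A) < 1$, since one can choose $A$ with $\mu(A) = 1/2$ giving the gap between $\alpha/2$ and $1/4$. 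Actually the cleanest choice: take $\mu(A)=1/2$ so mixing forces the limit $1/4$ while rigidity forces $\liminf \geq \alpha/2 > 1/4$.

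The concrete construction of $K$: enumerate a fast-growing sequence of "windows." Since we cannot know the rigidity sequence of $T$ in advance, we instead make $K$ thick and make its complement $M = \N \setminus K$ syndetic — wait, that is backwards. Reconsider: we want $T$ to fail to be mixing on $M := \N \setminus K$. So we want $M$ to be forced to meet every neighborhood of every rigidity time. The trick is that $M$ being the complement of a thick set $K$ means $M$ is \emph{not} syndetic, hence $M$ can actually be quite sparse — so this direction requires $M$ to nevertheless be unavoidable for rigidity sequences. The resolution is that rigidity sequences are \emph{not arbitrary}: by a pigeonhole/Lebesgue-density argument, for any $\alpha$-rigid $T$ with $\alpha > 1/2$ the set of rigidity times $\{r : \mu(T^r A \cap A) > \frac{1}{4} + \epsilon\}$ is \emph{syndetic} for suitable $A$ with $\mu(A) = 1/2$ and small $\epsilon$ — indeed if $\mu(T^rA\cap A) > 1/2\cdot\mu(A)$ fails on an interval $[N, N+L]$ then combining with an actual rigidity time $r_k$ in a far window, the return times $r_k, 2r_k, \dots$ and additivity-type estimates ($\mu(T^{r+s}A\cap A) \geq \mu(T^rA\cap A)+\mu(T^sA\cap A)-\mu(A)$) force overlap. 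So I would: (1) fix such $A$; (2) show rigidity implies the near-rigidity times form a syndetic set $S_A$; (3) build $K$ thick with $\N\setminus K$ meeting every long-enough interval, in particular meeting $S_A$; (4) conclude $T$ is not mixing on $\N \setminus K$.

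The main obstacle is step (2): proving that $\alpha$-rigidity with $\alpha > 1/2$ forces the near-return-times to be syndetic, uniformly enough that a single thick set $K$ (independent of $T$) works for all such $T$. The uniformity is the subtle point — I would handle it by noting the construction only needs $K$ to contain arbitrarily long intervals with prescribed rapidly-growing gaps, and the syndeticity constant of $S_A$ can be absorbed because rigidity times along $(r_k)$ generate, via the superadditivity inequality $\mu(T^{a+b}A\cap A) \geq \mu(T^aA\cap A) + \mu(T^bA\cap A) - \mu(A)$, approximate return times at every multiple $j r_k$ with controlled decay, so the gaps in $S_A$ are at most $r_k$ for the first $\lfloor (2\alpha-1)/(\text{loss})\rfloor$ multiples — giving a bound independent of $T$ for $\alpha$ bounded away from $1/2$. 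One then lets $K$ swallow intervals longer than any fixed syndeticity bound. Finally, the sharpness remark — that $\alpha = 1/2$ genuinely fails — is exactly Theorem \ref{v-staircase-thm}, so no separate argument is needed there.
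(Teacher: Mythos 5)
Your proposal contains the right elementary ingredient --- the superadditivity estimate $\mu(T^{a+b}A\cap A)\ge \mu(T^{a}A\cap A)+\mu(T^{b}A\cap A)-\mu(A)$ applied at $a=b=r_k$ --- but the framework you build around it has a genuine gap. Step (2), the claim that for an $\alpha$-rigid $T$ with $\alpha>1/2$ the set $S_A$ of near-return times is syndetic with a constant independent of $T$, is neither proved nor true. Your own argument produces approximate returns only at the multiples $jr_k$ for boundedly many $j$, and consecutive such multiples are $r_k$ apart with $r_k\to\infty$; a rigidity sequence can be as sparse as one likes (e.g.\ $r_k=2^{2^k}$), so ``gaps at most $r_k$'' is not a syndeticity bound and is certainly not a bound independent of $T$. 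Step (3) is also internally inconsistent as written: a set $K$ whose complement meets every sufficiently long interval has syndetic complement and hence is not thick. (One could salvage the intersection argument by choosing $K$ so that $K^c$ is also thick, but that still leans on the false syndeticity of $S_A$.)

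The fix is to drop syndeticity altogether, which is what the paper does. Choose $K$ thick with the single arithmetic property that $k\in K$ implies $2k\notin K$, for instance $K=\bigcup_{n=1}^{\infty}[2^{2n},2^{2n+1}-1]\cap\N$. If infinitely many rigidity times $r_n$ lie in $K^c$, you are done at once, since $\mu(T^{r_n}A\cap A)\ge\beta\mu(A)>\mu(A)^2$ there for small $A$. Otherwise $r_n\in K$ eventually, so $2r_n\in K^c$, and your superadditivity inequality gives $\mu(T^{2r_n}A\cap A)\ge\mu(A\cap T^{-r_n}A\cap T^{r_n}A)>(2\beta-1)\mu(A)$ for $1/2<\beta<\alpha$ and $n$ large; taking $\mu(A)<2\beta-1$ makes this exceed $\mu(A)^2$, killing mixing on $K^c$ along $2r_n$. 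Note finally that your fixed choice $\mu(A)=1/2$ would not survive this doubled-time step: one needs $(2\beta-1)\mu(A)>\mu(A)^2$, i.e.\ $\mu(A)<2\beta-1$, which for $\alpha$ close to $1/2$ forces $A$ to be small.
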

\begin{proof}
Let $K$ be any thick subset with the property that 
if $k \in K$, then $2k \notin K$.  For example, 
$K$ may be the set, 
\begin{align}\label{VV}
K = \bigcup_{n=1}^{\infty} [2^{2n}, 2^{2n+1}-1] \cap \N . 
\end{align}
Suppose invertible measure preserving transformation $T$ is 
$\alpha$-rigid with $\alpha > {1}/{2}$.  
Thus, there exists a sequence $r_n$ of positive integers such that 
for all measurable sets $A$, 
\[
\lim_{n\to \infty} \mu (T^{r_n}A\cap A) \geq \alpha \mu (A). 
\]
If $r_n \notin K$ for infinitely many $n$, then $T$ is not mixing 
on $K^c$ and we are done.  

Otherwise, $r_n \in K$ for sufficiently large $n$.  
Let $\beta$ be such that ${1}/{2} < \beta < \alpha$ and let 
$A$ be a set of positive measure such that $\mu (A) < 2\beta - 1$.  
Since $T$ is $\alpha$-rigid, there exists $N \in \N$ such that 
for $n \geq N$, $r_n \in K$ and 
\[
\mu (T^{r_n}A\cap A) > \beta \mu (A) . 
\]
Hence, for $n \geq N$, 
$\mu (A\cap T^{-r_n}A) > \beta \mu (A)$ and 
$\mu (A\cap T^{r_n}A) > \beta \mu (A)$.  
This implies 
\begin{align}
\mu (T^{2r_n}A\cap A) &\geq 
\mu (A\cap T^{-r_n}A \cap T^{r_n}A) \\ 
&> 2\big( \beta - \frac{1}{2}\big) \mu(A) \\ 
&= \big( 2\beta - 1\big) \mu (A) > \mu (A)^2 . 
\end{align}
Therefore, $T$ is not mixing on $2r_n \in K^c$ for $n\geq N$. 
\end{proof} 
An alternative proof of Proposition \ref{half} can be obtained by applying Theorem 5 from \cite{elA}. Therein, it is proved that any $\alpha$-rigid map with $\alpha>1/2$ is spectrally disjoint from any mixing map. 
Let us mention that Theorem 1 in \cite{R-rigid-m} required that the complement of the set $M$ for which one can construct a rigid map contain
$$\bigcup_{i=0}^{+\infty}[a_i,a_i+L_i] \cup[a_i,a_i+L_i]\cup \cdots [ia_i,ia_i+L_i].$$ 
for some $a_i, L_i \longrightarrow +\infty$ as $i\longrightarrow +\infty$. Obviously, such set is a thick set, but it is easy to see that the set 
$K$ in \eqref{VV} is thick and it is not satisfy this property. \\

One direction of Theorem \ref{v-staircase-thm} is straightforward 
and follows from the following proposition. 
\begin{prop}
\label{prop-2}
If a finite measure preserving transformation $T$ is mixing 
on a syndetic sequence $m_n$, $n\in \N$, then $T$ is strong mixing. 
\end{prop}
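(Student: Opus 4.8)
The plan is to exploit the defining property of a syndetic set: there is a fixed $L$ such that every window of $L$ consecutive integers meets $\{m_n\}$. Fix measurable sets $A,B$. Since $T$ is mixing along $(m_n)$, we have $\mu(T^{m_n}A\cap B)\to\mu(A)\mu(B)$, and the same holds for every pair obtained from $A,B$ by applying a bounded power of $T$. The idea is to show that an arbitrary integer $k$ is within bounded distance of some $m_n$, write $k=m_n+j$ with $|j|\le L$, and control the discrepancy $\mu(T^kA\cap B)-\mu(A)\mu(B)$ by comparing it to $\mu(T^{m_n}A'\cap B)-\mu(A')\mu(B)$ where $A'=T^jA$; the latter tends to $0$ along the relevant subsequence.

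The key steps, in order, are as follows. First, record that syndeticity gives a bound $L$ so that for every $k\in\N$ there is $n=n(k)$ with $|k-m_n|\le L$ and, crucially, $n(k)\to\infty$ as $k\to\infty$ (since each $m_n$ can be used for only finitely many $k$). Second, for a fixed shift $j$ with $|j|\le L$, apply the mixing-along-$(m_n)$ hypothesis to the pair $(T^jA,B)$ to get $\mu(T^{m_n}T^jA\cap B)=\mu(T^{m_n+j}A\cap B)\to\mu(A)\mu(B)$ as $n\to\infty$, using $\mu(T^jA)=\mu(A)$. Third, assemble these finitely many ($2L+1$ choices of $j$) limits into a single statement: given $\varepsilon>0$, pick $N$ so large that for all $n\ge N$ and all $|j|\le L$ one has $|\mu(T^{m_n+j}A\cap B)-\mu(A)\mu(B)|<\varepsilon$; then choose $K_0$ so that $k\ge K_0$ forces $n(k)\ge N$. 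For $k\ge K_0$, writing $k=m_{n(k)}+j$ with $|j|\le L$ yields $|\mu(T^kA\cap B)-\mu(A)\mu(B)|<\varepsilon$. This is exactly strong mixing.

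There is essentially no hard obstacle here; the only point requiring a little care is the uniformity in the finitely many shifts $j$, which is harmless because a finite intersection of tail conditions is again a tail condition. One should also note at the outset that mixing along $(m_n)$ implies $(m_n)$ is infinite and that $T$ is ergodic (indeed weakly mixing), so there is no degeneracy; and the statement as phrased needs $T$ to preserve a finite measure so that $\mu(T^jA)=\mu(A)$ and the product $\mu(A)\mu(B)$ is the correct mixing limit, which is given. In short, syndeticity converts ``mixing along a sparse-looking set'' into genuine mixing by absorbing the bounded gap into a bounded power of $T$ applied to one of the sets.
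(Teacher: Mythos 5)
Your proof is correct and follows essentially the same route as the paper: absorb the bounded gap into a bounded shift $j$, note that each shifted sequence $(m_n+j)$ is itself a mixing sequence (the paper transfers the shift to $B$ via $T^{-j}B$, you transfer it to $A$ via $T^jA$ — the same identity), and cover $\N$ by the finitely many shifted copies. Your extra care with the uniform $\varepsilon$--$N$ bookkeeping over the finitely many shifts is just a more explicit rendering of the paper's final sentence.
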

{\bf Proof:} 
Suppose $T$ is mixing on an increasing sequence $M=\{m_1, m_2, \ldots \}$ 
and $M$ has bounded gaps.  There exists $d$ such that $m_{n+1}-m_{n} \leq d$ 
for all $n \in \N$.  This implies for any measurable sets $A$ and $B$ 
and fixed $j$ that
\[
\lim_{n\to \infty} \mu ( T^{m_n + j}A\cap B ) = \lim_{n\to \infty} \mu ( T^{m_n}A\cap T^{-j}B ) = \mu ( A ) \mu ( T^{-j} B ) = \mu ( A ) \mu ( B ) .
\]
This proves that $m_n + j$ is a mixing sequence for each $0\leq j \leq d$.  Therefore, $\bigcup_{j=0}^{d} (M+j) = \N$ is a mixing sequence and $T$ is strong mixing. $\Box$ 

If the complement of $M$ is not thick, then $M$ is syndetic and every 
finite measure preserving transformation that is mixing on $M$, must be strong mixing. 
This proves one direction in Theorem \ref{v-staircase-thm}. 

\subsection{Staircase constructions}
\label{stair-construct-section}
Our ${1}/{2}$-rigid weak mixing transformations are constructed 
as a modification of the staircase transformations. 
Staircase transformations are rank-one transformations 
constructed via cutting and stacking.  Given a sequence 
$r_n$ of cut parameters, at the $n^{th}$ stage, cut the 
tower of intervals into $r_n$ subcolumns, add $j$ spacers 
on the $j^{th}$ subcolumn for $0\leq j \leq r_n - 1$ and 
stack left to right.  Note, we are adding zero spacers 
on the first subcolumn. 
If $r_n$ does not grow too rapidly, this defines an ergodic 
invertible measure preserving transformation on a standard 
probability space (i.e., $[0,1]$). 

\subsection{${1}/{2}$-rigid staircase constructions}
If the cut parameter $r_n = 2$ for infinitely many $n$, 
then the staircase transformation is ${1}/{2}$-rigid, 
and hence, not strong mixing.  
To obtain the counterexamples, we will set $r_n=2$ 
for rare $n\in \N$. 

Before proceeding with a proof of Theorem \ref{v-staircase-thm}, 
we will define three parameterized families 
of staircase transformations.  
We give the families the following identifiers: 
$\mathcal{S}$, $\mathcal{R}$ and $\mathcal{T}$.  
The final counterexamples are derived from the family $\mathcal{T}$, 
and the other two families are instrumental for obtaining 
the counterexamples. 

\subsubsection{Finite staircase transformations}
First, we define a family of transformations called $\mathcal{R}$.  
Given a positive integer $\alpha \geq 2$ and a column $C$ 
of intervals of equal width, a transformation $R_{\alpha,C}$ is defined 
using cutting and stacking in the following manner.  
First, cut column $C_0=C$ into $\alpha$ subcolumns of equal width, 
add a staircase of height $\alpha-1$ on top of the subcolumns 
and stack from left to right.  This produces column $C_1$.  
Column $C_{n+1}$ is produced in a similar manner from column $C_n$ 
by cutting $C_n$ into $\alpha$ subcolumns of equal width, 
adding a staircase of height $\alpha-1$ and stacking from left to right.  
This produces a rank-one finite measure preserving transformation, 
normalized to be defined a.e. on $[0,1]$.  
This transformation will be partially rigid, hence not strongly 
mixing. 
It will also have the property of being uniform Cesaro 
which we define here\footnote{This notion was introduced by the second author and N. Friedman here \cite{AF}.}.  

\begin{defn}
A transformation $R$ is uniform Cesaro if for all measurable sets $A$, 
\[
\lim_{N\to \infty} \sup_{q\in \N} \int_X 
\Big| \frac{1}{N} \sum_{i=0}^{N-1} \mathbbm{1}_A\big( R^{iq}x \big) - \mu(A) \Big| d\mu 
= 0 . 
\]
\end{defn}

\subsubsection{Mixing staircase transformations}
Now we define a subfamily of staircase transformations 
denoted by $\mathcal{S}$.  These transformations 
will be defined using 3 sequences of parameters, 
denoted $\vec{\alpha} = (\alpha_p)_{p=1}^{\infty}$, 
$\vec{\beta}=(\beta_p)_{p=1}^{\infty}$ and a binary sequence 
$\vec{b}=(b_p)_{p=1}^{\infty}$.  Also, $\beta_1=1$.  
Given these sequences, 
a transformation $S=S_{\alpha,\beta,b} \in \mathcal{S}$ 
is defined where $r_n = \alpha_p + b_p$ for $\beta_p \leq n < \beta_{p+1}$.  
Each transformation $S\in \mathcal{S}$ will be produced 
recursively by alternating between two different types of steps.  
One step is to fix the cut parameter $\alpha$ and use the 
fact that $R_{\alpha,C}$ is uniform Cesaro to choose 
the next cut parameter.  In the second step, the cut parameter 
sequence is modified by a binary vector to approximate 
an element from $K \subset M^c$ (define by eq. \eqref{T-def} in subsection \ref{Diagram}).
These steps are repeated infinitely often to produce 
a transformation $S$.  We guarantee $S$ is mixing, 
since it will not be difficult to enforce the following 
conditions, $\lim_{n\to \infty} r_n = \infty$ and 
\[
\lim_{n\to \infty} \frac{r_n^2}{h_n} = 0 . 
\]
By \cite{Adams}, the transformation $S$ will be strongly mixing. 

\subsection{Supporting lemma}
Prior to proving Theorem \ref{v-staircase-thm}, we will define 
some notation and prove a couple lemmas on the combinatorics 
of the cut parameters and corresponding heights. 

To define a staircase transformation, an infinite sequence 
$r_n\geq 2$, $n\in \N$, is specified.  We may denote 
this sequence using vector notation as 
\[
\vec{r} = (r_1, r_2, \ldots ) . 
\]
For $i\in \N$, let $\vec{r}(i) = r_i$.  
Also, for $i\in \N$, let $\vec{e_i}$ be 
the vector with a one in the $i^{th}$ position and zeros 
everywhere else.  
Each vector of cut parameters and initial value $h_1$, 
determines a vector of heights given by the formula 
\[
h_{n+1} = r_n h_n + \frac{r_n(r_n-1)}{2} . 
\]
We may refer to this vector as 
$\vec{h} = (h_1, h_2, \ldots )$ and use the function 
$\mathcal{H}$ to define the mapping 
\[
\vec{h} = \mathcal{H} ( \vec{r}, h_1 ) . 
\]
For convenience, let 
$\mathcal{H}_i (\vec{r},h_1 ) = \vec{h}(i)$. 
We define a nondecreasing vector $\vec{r}$ as a vector 
whose components are nondecreasing 
(i.e., $r_1 \leq r_2 \leq \ldots$).  

\begin{lem}
\label{v-s-lem1}
Suppose $\vec{r}$ is a nondecreasing vector 
and $h_1\in \N$.  
If $j, k,n \in \N$ such that 
$\mathcal{H}_{n}(\vec{r},h_1) \leq k < \mathcal{H}_{n+1}(\vec{r},h_1)$ and 
\begin{align}
k <& \Big( \Pi_{i=j}^{n-1} \big( r_i + 1\big) \Big) h_j ,\label{lem-eq1}
\end{align}
then there exists a binary vector 
\begin{align}
\vec{b} =& \sum_{i=j}^{n-1} b_i \vec{e}_i 
\end{align}
such that 
\begin{align}
0 \leq k - \mathcal{H}_{n}(\vec{r}+\vec{b},h_1) <& 
\Big( \frac{1}{r_j} + \frac{1}{\mathcal{H}_j(\vec{r},h_1)} \Big) 
\mathcal{H}_{n}(\vec{r}+\vec{b},h_1) \label{lem-eq2}. 
\end{align}
\end{lem}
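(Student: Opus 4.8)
Let me unpack what Lemma~\ref{v-s-lem1} is really asking. We have a nondecreasing vector of cut parameters $\vec r$, and for some target integer $k$ sitting in the height window $[\mathcal H_n(\vec r, h_1), \mathcal H_{n+1}(\vec r, h_1))$, we want to perturb the cut parameters $r_j, r_{j+1}, \ldots, r_{n-1}$ by $0$ or $1$ (the binary vector $\vec b = \sum_{i=j}^{n-1} b_i \vec e_i$) so that the new $n$-th height $\mathcal H_n(\vec r + \vec b, h_1)$ approximates $k$ from below, with relative error at most $1/r_j + 1/\mathcal H_j(\vec r, h_1)$. The hypothesis \eqref{lem-eq1} says $k < \bigl(\prod_{i=j}^{n-1}(r_i+1)\bigr) h_j$, i.e., $k$ is small enough that even pushing all cut parameters up by one overshoots $k$ — so a greedy reach is plausible.

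**Proof strategy: greedy construction from stage $j$ upward.** I would define the perturbed heights recursively. Write $g_j = h_j = \mathcal H_j(\vec r, h_1)$, and for $i = j, \ldots, n-1$ choose $b_i \in \{0,1\}$ and set
\[
g_{i+1} = (r_i + b_i) g_i + \frac{(r_i+b_i)(r_i+b_i-1)}{2},
\]
so that $g_{i} = \mathcal H_i(\vec r + \vec b, h_1)$ for $i \geq j$ (note the perturbation only touches indices $\geq j$, so stages below $j$ are unchanged). The greedy rule: at stage $i$, set $b_i = 1$ if doing so keeps $g_{i+1} \leq$ (the best possible upper bound on what we can still reach), and $b_i = 0$ otherwise. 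More precisely, I would track the invariant that after choosing $b_j, \ldots, b_{i-1}$, the target $k$ still lies between $g_i \cdot (\text{min future growth})$ and $g_i \cdot (\text{max future growth})$, where max future growth corresponds to all remaining $b = 1$ and min to all remaining $b = 0$. Hypothesis \eqref{lem-eq1} is exactly what guarantees this invariant holds at the start: $k < \prod_{i=j}^{n-1}(r_i+1) \cdot h_j$ is the "max reach" condition, and $k \geq \mathcal H_n(\vec r, h_1)$ (the left end of the window) is the "min reach" condition. Maintaining the invariant at each step is a short case check: if $b_i = 0$ would already overshoot the lower requirement we are forced to... actually the cleaner framing is that we greedily take $b_i = 1$ whenever the resulting $g_{i+1}$ does not exceed $k$'s reachability, ensuring $g_n \leq k$ always, while \eqref{lem-eq1} ensures we never "fall behind" so badly that $g_n$ is much smaller than $k$.

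**The error estimate — this is where the real work is.** Having produced $\vec b$ with $0 \leq k - g_n$, I need the upper bound $k - g_n < \bigl(1/r_j + 1/h_j\bigr) g_n$. The idea: since the construction is greedy, at the first stage $i_0$ where we set $b_{i_0} = 0$ (if there is one — if all $b_i = 1$ then $g_n = \prod(r_i+1)h_j + (\text{staircase terms}) > k$ by \eqref{lem-eq1}, contradiction, unless $g_n$ already equals what it should), flipping it to $1$ would overshoot $k$. So $k$ lies strictly between $g_n$ and the height we would get by flipping $b_{i_0}$ to $1$ and then continuing. The gap between these two is controlled: changing $b_{i_0}$ from $0$ to $1$ multiplies $g_{i_0+1}$ by a factor at most $(r_{i_0}+1)/r_{i_0} = 1 + 1/r_{i_0} \leq 1 + 1/r_j$ (using $\vec r$ nondecreasing and $r_{i_0} \geq r_j$, since $i_0 \geq j$), plus a lower-order staircase correction of size $\sim (r_{i_0}+1)/(2g_{i_0})$ which, propagated forward, contributes the $1/h_j$ term (here I'd use that $g_{i_0} \geq g_j = h_j$). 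Propagating this single-stage multiplicative discrepancy forward through stages $i_0+1, \ldots, n-1$ preserves it as a relative error on $g_n$. The main obstacle is doing this propagation carefully: the staircase term $\frac{r_i(r_i-1)}{2}$ is additive, not multiplicative, so I must verify that when I compare $\mathcal H_n(\vec r + \vec b', h_1)$ against $\mathcal H_n(\vec r + \vec b, h_1)$ for two binary vectors differing in one coordinate, the ratio stays $\leq (1 + 1/r_j)(1 + 1/h_j)$ or so — and then check that $(1+1/r_j)(1+1/h_j) - 1 \leq 1/r_j + 1/h_j + 1/(r_j h_j)$ is absorbed, which needs a mild bookkeeping adjustment (perhaps the stated bound already has slack, or one argues the cross term is dominated). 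I would isolate this as a sub-lemma: for nondecreasing $\vec r$ and $j \leq m < n$,
\[
\mathcal H_n(\vec r + \vec e_m, h_1) \leq \Bigl(1 + \tfrac{1}{r_m} + \tfrac{1}{h_m}\Bigr)\,\mathcal H_n(\vec r, h_1),
\]
proved by induction on $n$ starting from the single-stage identity, and then conclude by applying it at $m = i_0$ together with the fact that the greedy choice sandwiches $k$ between $g_n$ and $\mathcal H_n(\vec r + \vec b + \vec e_{i_0}, h_1)$.
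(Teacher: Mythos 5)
Your proposal follows essentially the same route as the paper: the heart of both arguments is the single-coordinate perturbation bound $\mathcal H_n(\vec r+\vec b+\vec e_{i_0},h_1)-\mathcal H_n(\vec r+\vec b,h_1)<\bigl(\tfrac{1}{r_{i_0}}+\tfrac{1}{h_{i_0}}\bigr)\mathcal H_n(\vec r+\vec b,h_1)$ propagated forward through the recursion, combined with choosing a binary vector that cannot be improved by a single flip (the paper takes the vector maximizing $\mathcal H_n$ subject to staying $\leq k$ and derives a contradiction at the last zero coordinate, while you run a forward greedy and sandwich $k$ at the first zero coordinate --- the same mechanism). Your worry about the cross term $1/(r_jh_j)$ evaporates if you state the perturbation bound as a difference rather than a ratio, since $h_i+r_i<\tfrac{h_{i+1}}{r_i}+\tfrac{h_{i+1}}{h_i}$ holds directly from $h_{i+1}>r_ih_i$; this is exactly how the paper phrases it.
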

This lemma says that if increasing all of the cut parameters 
in a range by one surpasses the value $k$, then it is possible 
to modify certain cut parameters by one, so that 
a new height parameter approximates $k$.
\begin{proof}
Let $\vec{h} = \mathcal{H} (\vec{r},h_1)$.  
Suppose we modify only the $i^{th}$ component of $\vec{r}$.  
Thus, 
\begin{align*}
\mathcal{H}_{i+1}(\vec{r}+\vec{e_i},h_1) - \mathcal{H}_{i+1}(\vec{r},h_1) 
=& \big( r_i+1 \big) h_i + \frac{(r_i+1)r_i}{2} - 
\Big( r_i h_i + \frac{r_i(r_i-1)}{2} \Big) \\ 
=& h_i + r_i \\ 
<& \Big( \frac{1}{r_i} + \frac{1}{h_i} \Big) h_{i+1} 
\end{align*}
Notice that we can still cap the difference, as we compute 
forward values of $\mathcal{H}$.  
For the next equation,
let $h^{\dprime}=\mathcal{H}(\vec{r}+\vec{e_i}+\vec{e_{i+1}},h_1)$ 
and $h^{\prime}=\mathcal{H}(\vec{r}+\vec{e_{i+1}},h_1)$. 
Then 
\begin{align*}
\mathcal{H}_{i+2}(\vec{r}+&\vec{e_i}+\vec{e_{i+1}},h_1) - 
\mathcal{H}_{i+2}(\vec{r}+\vec{e_{i+1}},h_1) \\ 
=& \big( r_{i+1}+1 \big) h_{i+1}^{\dprime} + \frac{(r_{i+1}+1)r_{i+1}}{2} - 
\Big( (r_{i+1}+1) h_i^{\prime} + \frac{(r_{i+1}+1)r_{i+1}}{2} \Big) \\ 
=& \ \big( r_{i+1}+1 \big) \big( h_{i+1}^{\dprime} - h_{i+1}^{\prime} \big) \\ 
\leq& \big( r_{i+1}+1\big) \Big( \frac{1}{r_i} + \frac{1}{h_i} \Big) 
h_{i+1} \\ 
=& \Big( \frac{1}{r_i} + \frac{1}{h_i} \Big) \big( r_{i+1}+1 \big) h_{i+1} \\ 
<& \Big( \frac{1}{r_i} + \frac{1}{h_i} \Big) h_{i+2}^{\prime}
\end{align*}
Let $B = \{ \vec{b}=\sum_{i=j}^{n-1} b_i \vec{e}_i : b_i \in \{0,1\} \}$.  
Let $\vec{b}_0$ be such that for each $\vec{b}\in B$, 
\[
\mathcal{H}_{n}(\vec{r}+\vec{b}, h_1) \leq \mathcal{H}_{n}(\vec{r}+\vec{b_0}, h_1) 
\leq k . 
\]
The binary vector $b_0$ is chosen to maximize the modified height 
without going over $k$.  
We prove by contradiction that $\vec{b}=\vec{b}_0$ satisfies 
the condition (\ref{lem-eq2}) of the lemma.  
Suppose it does not.  We know that all $b_i$ are not 1, since condition 
(\ref{lem-eq1}) would imply the modified height would then be greater than $k$.  
Let $i_0$ be the maximum $i_0\leq n$ such that $b_{i_0}=0$.  
Then we can set $b_{i_0}=1$, and the new height would still be less 
than $k$. 
This is true, since by the same argument above, the following holds 
\[
\mathcal{H}_{n}(\vec{r}+\vec{b_0}+\vec{e_{i_0}},h_1) - 
\mathcal{H}_{n}(\vec{r}+\vec{b_0},h_1) < 
\Big( \frac{1}{r_{i_0}} + \frac{1}{h_{i_0}} \Big) 
\mathcal{H}_{n}(\vec{r}+\vec{b_0},h_1) . 
\]
Hence, 
\begin{align*}
&\mathcal{H}_{n}(\vec{r}+\vec{b_0}+\vec{e_{i_0}},h_1) < 
\mathcal{H}_{n}(\vec{r}+\vec{b_0},h_1) + 
\Big( \frac{1}{r_{i_0}} + \frac{1}{h_{i_0}} \Big) 
\mathcal{H}_{n}(\vec{r}+\vec{b_0},h_1) \\ 
\leq& k - \Big( \frac{1}{r_{j}} + \frac{1}{h_{j}} \Big) 
\mathcal{H}_{n}(\vec{r}+\vec{b_0},h_1) + 
\Big( \frac{1}{r_{i_0}} + \frac{1}{h_{i_0}} \Big) 
\mathcal{H}_{n}(\vec{r}+\vec{b_0},h_1) \leq k . 
\end{align*}
This proves the lemma by contradiction. 
\end{proof}

\subsection{Flow Diagram}\label{Diagram}
In this section, we give an overview of the construction 
of ${1}/{2}$-rigid transformations.  The construction uses 
a nested infinite recursion.  For the inner recursion, 
we alternate between a uniform Cesaro substage and a number 
approximation stage.  The number approximation stage is used 
to closely approximate infinitely many values from the thick 
subset $K$ using column heights of this transformation.  
These two alternating steps are repeated infinitely 
often to produce a mixing staircase transformation.  
Then we use the property of mixing to carefully select 
a height to introduce a ${1}/{2}$-rigid time (i.e., cut in half 
and stack without adding spacer).  

The mixing transformation is interrupted, and we return to the 
previous stage with two substages (uniform Cesaro $\to$ number approximation). 
Figure \ref{fig:flow-diagram} is a diagram showing the nested recursion loops. 
It is straightforward that the resulting transformation is ${1}/{2}$ rigid. 
Since the complement of $M$ is a thick subset, there exist
increasing sequences $k_n$ and $\ell_n$ of natural numbers such that 
\[
k_n + i \notin M\ \ \mbox{for}\ -\ell_n \leq i \leq \ell_n . 
\] 
Set 
\begin{align}\label{T-def}
K = \{ k_n: n\in \N \} \subset M^c . 
\end{align}

In the construction, we will use a sequence $\epsilon_i > 0$ 
of real numbers such that 
\[
\sum_{i=1}^{\infty} \epsilon_i < \infty . 
\]







\begin{figure}
\begin{tikzpicture}

\node (rect) at (2.2,1)[draw, thick, minimum width=6cm, minimum height=3cm]{};

\node at (.6,1)[circle, draw, minimum size=1cm, aspect=.5, inner sep=0pt, outer sep=0pt, text width=1.8cm] {
\begin{minipage}{3.0cm}
\parbox{1.8cm}{\begin{center}{\small Uniform\\Cesaro\\substage}\end{center}}
\end{minipage}
};
\node at (3.7,1)[circle, draw, minimum size=1cm, aspect=1, inner sep=1pt, outer sep=-1pt, text width=1.8cm] {
\begin{minipage}{3.0cm}
\parbox{1.8cm}{\begin{center}{\small Number\\ approximation substage}\end{center}}
\end{minipage}
};
\draw[<-] (1.3,2.1) .. controls +(up:0.5cm) and +(right:.1cm) .. (3.0,2.2) node[above](){};
\draw[->] (1.2,-0.2) .. controls +(down:0.5cm) and +(left:.4cm) .. (2.8,-0.1) node[above](){};

\node (rect) at (9.7,1)[draw, thick, minimum width=3cm, minimum height=3cm]{
\begin{minipage}{2.4cm}
{\small Selection of $\frac{1}{2}$-rigid time}
\end{minipage}
};

\draw[->] (5.5,1) -- (8.0,1) node[above](){\parbox{5.0cm}{{\small Mixing\\ Transformation}}};
\draw[-{Stealth[length=3mm, width=2mm]}] (9.7,-0.7) .. controls +(down:1cm) and +(down:1cm) .. (2.2,-0.7) node[below,pos=.46](){
\parbox{8cm}{{\small Interrupt mixing construction and repeat process}}};

\end{tikzpicture}
\caption{Construction Flow Diagram}
\label{fig:flow-diagram}
\end{figure}
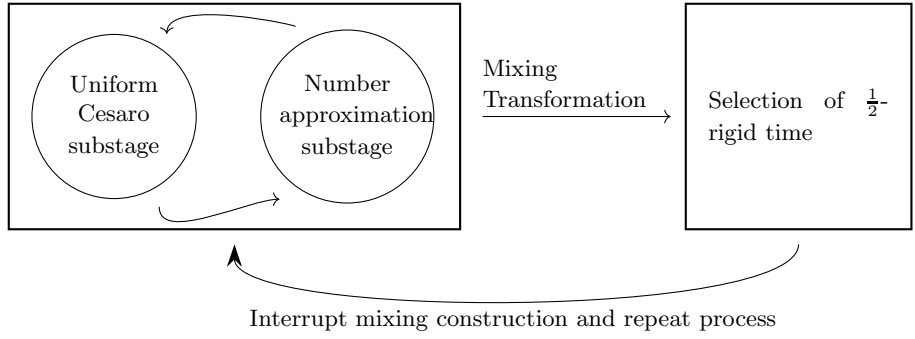

\subsection{Uniform Cesaro and mixing}
We describe two substages that are used recursively to construct 
mixing transformations.  The procedure for constructing the mixing 
transformations is then modified in an infinite loop to produce 
a ${1}/{2}$-rigid transformation. 

\subsubsection{$p$-stage uniform Cesaro}
\label{UC-substage}
Let $C_p$ be a column of intervals of height $H_p$ and $P_p$ be the partition 
consisting of $H_p + 1$ elements, $H_p$ elements for each interval of $C_p$, 
and the complement of $C_p$.  Given $\alpha_p \in \N$, let 
$R = R_{\alpha_p,C_p}$ be the uniformly Cesaro transformation with 
parameters $\alpha_p$ and $C_p$.  There exists $N_p \in \N$ such that 
for $n \geq N_p$ and $A\in P_p$, 
\begin{align}
\sup_{q\in \N} \int_X \Big| \frac{1}{n} \sum_{i=0}^{n-1} 
\mathbbm{1}_A \big( R^{-iq} x \big) - \mu(A) \Big| d\mu 
\leq \epsilon_{H_p} \mu(A) . \label{UC-cond}
\end{align}

\subsubsection{Number approximation stage}
\label{NA-substage} 
Let $\rho_p$ be a natural number such that 
\[
\frac{1}{\rho_p} < \epsilon_p . 
\]
Choose $j_p \in \N$ such that the height 
$h_{j_p}$ produced from the transformation $R_{\alpha_p,C_p}$ 
from the previous substage (\ref{UC-substage}) satisfies, 
\[
\frac{(\rho_p + 1)^2}{h_{j_p}} < \epsilon_p . 
\]
At this point, set $r_n = \rho_p$ for $n\geq j_p$.  
Thus, 
\[
h_{j_p+1} = \rho_p h_{j_p} + \frac{\rho_p(\rho_p-1)}{2} 
\]
and for $n \geq j_p$, 
\[
h_{n} = \rho_p^{n-j_p} h_{j_p} + \frac{\rho_p(\rho_p^{n-j_p}-1)}{2} . 
\]
This implies 
\begin{align}
\lim_{n\to \infty} \frac{(\rho_p+1)^{n-j_p} h_{j_p}}{ h_{n+1} } &= 
\lim_{n\to \infty} \frac{(\rho_p+1)^{n-j_p} h_{j_p}}{ \rho_p^{n-j_p+1}h_{j_p} + \frac{\rho_p(\rho_p^{n-j_p+1}-1)}{2} } \\
&\geq \lim_{n\to \infty} \frac{(\rho_p+1)^{n-j_p} h_{j_p}}{ \rho_p^{n-j_p+1}h_{j_p} + \rho_p^{n-j+2} } \\ 
&= \infty . 
\end{align}

Hence, for $k \in K$ sufficiently large, 
there exists $n=n(p,k)$ such that 
\[
h_n \leq k < h_{n+1} < \big( \rho_p + 1\big)^{n-j_p} h_{j_p} . 
\]
Choose one such $k\in K$ and place it in the set denoted $K^{\prime}$.  
By Lemma \ref{v-s-lem1}, there exists 
$\vec{b} = \sum_{i=j_p}^{n-1} b_i \vec{e}_i$ such that 
\begin{align}
0 \leq k - \mathcal{H}_{n}(\vec{r}+\vec{b},h_1) <& 
\Big( \frac{1}{r_{j_p}} + \frac{1}{\mathcal{H}_{j_p}(\vec{r},h_1)} \Big) 
\mathcal{H}_{n}(\vec{r}+\vec{b},h_1) . 
\end{align}
Set $\alpha_{p+1} = \rho_p$ and let $C_{p+1}$ be the resulting column 
of height $\mathcal{H}_n(\vec{r}+\vec{b}, h_1)$.  
Now, return to step (\ref{UC-substage}).  

This infinite loop produces an infinite set $K^{\prime} \subset K$.  

\subsection{Mixing transformation recursion}
\label{mix-trans-rec-section}
If the substages (\ref{UC-substage}) and (\ref{NA-substage}) are repeated 
infinitely often, then the resulting transformation is strongly mixing.  
This is due to the fact that $\lim_{n\to \infty} r_n = \infty$ and 
\[
\lim_{n\to \infty} \frac{r_n^2}{h_n} = 0 . \label{spacer-cond}
\]

Let $S_1 \in \mathcal{S}$ be the initial transformation produced from 
repeating substages (\ref{UC-substage}) and (\ref{NA-substage}), 
infinitely often.  Since $S_1$ is strong mixing, there 
exists a positive integer $L_1$ such that for $A,B \in P_1$ and $n\geq L_1$, 
\[
| \mu(T^n A\cap B) -\mu(A)\mu(B) | \leq \epsilon_1 \mu(A)\mu(B) . 
\]
Choose $y_1$ such that $\ell_{y_1} > L_1$ and $k_{y_1} \in K_1=K^{\prime}$ 
sufficiently large such that its corresponding $p=p_1$, $n=n(p,k_{y_1})$ 
and binary vector $\vec{b}_1$ satisfy 
\[
\epsilon_1 \mathcal{H}_{n} > L_1, 
\]
and 
\begin{align}
0 \leq k_{y_1} - \mathcal{H}_{n}(\vec{r}+\vec{b}_1,h_1) <& 
\Big( \frac{1}{r_{j_p}} + \frac{1}{\mathcal{H}_{j_p}(\vec{r},h_1)} \Big) 
\mathcal{H}_{n}(\vec{r}+\vec{b}_1,h_1) . 
\end{align}
At this point, we modify the mixing construction $S_1$.  
Consider its column, denoted $D_1$ of height 
$\mathcal{H}_{n}(\vec{r}+\vec{b}_1,h_1)$.  
Add $k-\mathcal{H}_{n}(\vec{r}+\vec{b}_1,h_1)$ spacers to the top of $D_1$ 
and let $\mathcal{H}_n$ be the height of this new column. 
Then cut $D_1$ in half and stack the right half on top of the left half.  
Return to the infinite recursion of alternating substages 
(\ref{UC-substage}) and (\ref{NA-substage}) with $\alpha = r_n-1$ 
to produce a mixing transformation denoted by $S_2$.  

For the general step, given a strong mixing transformation 
$S_i \in \mathcal{S}$, 
there exists a positive integer $L_i$ such that 
for $A,B \in P_i$ and $n\geq L_i$, 
\[
| \mu(T^n A\cap B) -\mu(A)\mu(B) | \leq \epsilon_i \mu(A)\mu(B) . 
\]
Choose $y_i$ such that $\ell_{y_i} > L_i$ and $k_{y_i} \in K_i=K^{\prime}$ 
sufficiently large such that there exists $p_i > p_{i-1}$, $n_i$ 
and binary vector $\vec{b}_i$ such that 
\[
\epsilon_i \mathcal{H}_{n_i} > L_i, 
\]
and 
\begin{align}
0 \leq k_{y_i} - \mathcal{H}_{n_i}(\vec{r}+\vec{b}_i,h_1) <& 
\Big( \frac{1}{r_{j_{p_i}}} + \frac{1}{\mathcal{H}_{j_{p_i}}(\vec{r},h_1)} \Big) 
\mathcal{H}_{n_i}(\vec{r}+\vec{b}_i,h_1) . \label{meas-add}
\end{align}
At this point, we modify the mixing construction $S_i$.  
Consider its column, denoted $D_i$ of height 
$\mathcal{H}_{n_i}(\vec{r}+\vec{b}_i,h_1)$.  
Add $k-\mathcal{H}_{n_i}(\vec{r}+\vec{b}_i,h_1)$ spacers to the top of $D_i$ 
and let $\mathcal{H}_{n_i}$ be the height of this new column, renamed $D_i$.  
Then cut $D_i$ in half and stack the right half on top of the left half.  
Return to the infinite recursion of alternating substages 
(\ref{UC-substage}) and (\ref{NA-substage}) with $\alpha = r_{n_i-1}$ 
to produce a mixing transformation denoted by $S_{i+1}$.  

The resulting transformation $T$ is finite measure preserving, since 
we do not add an infinite amount of measure.  In particular, 
the proportion of measure added on top of each column of 
height $\mathcal{H}_{n_i}$ satisfies
\begin{align}
\Big( \frac{1}{r_{j_{p_i}}} + \frac{1}{\mathcal{H}_{j_{p_i}}(\vec{r},h_1)} \Big) &\leq 
\frac{2}{r_{j_{p_i}}} \\ 
&= \frac{2}{\rho_{p_i}} < 2\epsilon_{p_i} . 
\end{align}
Since the sequence $p_i$ is strictly increasing, then the amount of measure added 
is summable.

\subsection{Proof of Theorem \ref{v-staircase-thm}}
One direction of the theorem follows from 
Proposition \ref{prop-2}.  For the other direction, 
assume $M \subset \N$ such that the complement of $M$ contains 
a thick subset.  Also, assume parameters have been generated 
as in the previous sections including 
$k_i, \ell_i, p_i, \vec{b}_i, n_i$ and $S_i$.  
Partition the natural numbers into two sets which we consider 
separately, 
\begin{align}
M_1 =& \bigcup_{i=1}^{\infty} \Big( [\epsilon_i \mathcal{H}_{n_i}, \mathcal{H}_{n_i}-L_i] \cup 
[\mathcal{H}_{n_i}+L_i, \mathcal{H}_{n_i+1}] \cap \N \Big) , \\ 
M_2 =& M \setminus M_1 . 
\end{align}

\subsubsection{$m_i \in M_2$}
For $m_i \in M_2$, there exists $h_j$ such that 
\[
h_j \leq m_i < h_{j+1} . 
\]
The columns $C_j$ and $C_{j+1}$ of heights $h_j$ and $h_{j+1}$ 
respectively both have staircases of heights 
$r_j$ and $r_{j+1}$ on top.  Due to the uniform Cesaro condition 
(\ref{UC-cond}) that we enforce, then 
for $A, B\in P_q$, 
\[
\lim_{m_i\in M_2, m_i \to \infty} 
\mu (T^{m_i} A\cap B) = \mu (A)\mu (B) . 
\]
See \cite{AF} for a detailed proof. 

\subsubsection{$m_i \in M_1$} 
This case may seem a bit more subtle.  We use the mixing condition 
applied to each $S_i$.  

First, separate this into two subcases:
\begin{itemize}
\item $m_i \in [\epsilon_i \mathcal{H}_{n_i}, \mathcal{H}_{n_i} - L_i]$, 
\item $m_i \in [\mathcal{H}_{n_i}+L_i, \mathcal{H}_{n_i+1}]$. 
\end{itemize}

We start with the first case. 
Let $D_{i,1}$ be the bottom $\mathcal{H}_{n_i}$ levels 
of column $D_i$.  Also, let $D_{i,2}$ be the left side of the top 
$m_i$ levels of $D_i$ and $D_{i,3}$ be the right side of the top 
$m_i$ levels of $D_i$.  
Let $A_{i,j} = A\cap D_{i,j}$ for $j=1,2,3$.  Thus, we have 
\[
\mu (T^{m_i} A_{i,1} \cap B) \leq \mu (S_i^{m_i} A_{i,1} \cap B) 
\leq \mu (S_i^{m_i} A \cap B) \to \mu (A)\mu(B)\ \mbox{as}\ i\to \infty . 
\]
The previous line is true, since $m_i \geq \epsilon_i \mathcal{H}_{n_i} > L_i$. 
\[
\mu (T^{m_i} A_{i,2} \cap B) \leq \mu (S_i^{\mathcal{H}_{n_i}-m_i} A_{i,2} \cap B) \leq 
\mu (S_i^{\mathcal{H}_{n_i}-m_i} A \cap B) \to \mu(A)\mu(B)\ \mbox{as}\ i\to \infty . 
\]
This is true, since $\mathcal{H}_{n_i}-m_i \geq \mathcal{H}_{n_i}-(\mathcal{H}_{n_i}-L_i)=L_i$. 
See Figure \ref{fig:verify-mix} for a pictorial describing this case. 

For the second case, let $D_{i,2}$ be the bottom $(m_i - \mathcal{H}_{n_i})$ levels 
of the left side of column $D_i$.  Let $D_{i,1}$ be the remaining levels or half-levels 
of $D_i$.  In this case, let $A_{i,j} = A\cap D_{i,j}$ for $j=1,2$.  
See Figure \ref{fig:verify-mix2} for a pictorial describing this case. 
Due to the uniform Cesaro property, $T^{m_i}A_{i,1}$ will mix with a set $B$.  
For $A_{i,2}$, we have 
\[
\mu (T^{m_i} A_{i,2} \cap B) \leq \mu (S_i^{m_i-\mathcal{H}_{n_i}} A_{i,2} \cap B) \leq 
\mu (S_i^{m_i-\mathcal{H}_{n_i}} A \cap B) \to \mu(A)\mu(B)\ \mbox{as}\ i\to \infty . 
\]
This previous line is true, since 
$m_i-\mathcal{H}_{n_i}\geq (\mathcal{H}_{n_i}+L_i) - \mathcal{H}_{n_i}=L_i$. 

Therefore, by applying Lemma \ref{GO} to the sequence $M_1$, we proved that 
$T$ is mixing on $M$. $\Box$

\section{$r$-thick sets and ${r}/{(r+1)}$-rigidity}
In this section, we generalize the results from the previous section 
to transformations with greater rigidity.  First, we introduce 
the notion of $r$-thick sets. 

\begin{defn}
Given a positive integer $r$, a set $K \subset \N$ is $r$-thick, 
if for each $\ell \in \N$, there exists $k \in \N$ such that 
\[
\{ jk + i : 1\leq j \leq r, -\ell \leq i \leq \ell \} \subset K . 
\]
\end{defn}
The definition of a $1$-thick set coincides with the usual 
definition of a thick set.  
Also, it is easy to verify that a $(n+1)$-thick set is necessarily 
$n$-thick.  However, for each $n \in \N$, there exists 
an $n$-thick set which is not $(n+1)$-thick. 

\begin{prop}
For any positive integer $r$, there exists an $r$-thick set 
$K \subset \N$ such that if $\alpha > \frac{r}{r+1}$ and 
a measure preserving transformation $T$ is $\alpha$-rigid, 
then $T$ is not mixing on the complement of $K$. 
\end{prop}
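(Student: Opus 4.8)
The plan is to generalize the construction used in Proposition \ref{half} from the factor $2$ to the factor $r+1$, exploiting the fact that $\alpha$-rigidity with $\alpha > r/(r+1)$ forces iterated overlaps along $k, 2k, \ldots, (r+1)k$ to have total measure exceeding $1$, hence forcing nontrivial correlation at time $(r+1)k$. First I would fix an $r$-thick set $K$ with the property that whenever $\{jk+i : 1\le j \le r, -\ell \le i \le \ell\} \subset K$ for the relevant scales, the point $(r+1)k$ lies in $K^c$; concretely one can take a lacunary union of blocks $\bigcup_n [(r+1)^{c_n}, (r+1)^{c_n+1}-1]$ with $c_n$ growing fast enough that the $r$-thickness witnesses $k$ in block $n$ always satisfy $(r+1)k$ landing in the gap before block $n+1$. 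This is the analogue of \eqref{VV}.

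Next I would run the dichotomy exactly as in Proposition \ref{half}. Let $T$ be $\alpha$-rigid with rigidity times $r_n$ and $\alpha > r/(r+1)$. If infinitely many $r_n \in K^c$, then $T$ fails to mix on $K^c$ immediately. Otherwise $r_n \in K$ for all large $n$; pick $\beta$ with $r/(r+1) < \beta < \alpha$ and a set $A$ of positive measure with $\mu(A) < (r+1)\beta - r$. For large $n$, $\mu(T^{jr_n}A \cap A) > \beta\mu(A)$ for every $j$ in a bounded range — here one needs that a single rigidity time $r_n$ gives control of all the dilates $jr_n$ simultaneously; this follows because $\lim_n \mu(T^{r_n}A\cap A) \ge \alpha\mu(A)$ already implies, by the standard argument that a rigid sequence stays rigid under taking iterates along the same subsequence (or more simply, $\|T^{r_n}\mathbbm{1}_A - \mathbbm{1}_A\|_2 \to 0$ forces $\|T^{jr_n}\mathbbm{1}_A - \mathbbm{1}_A\|_2 \to 0$ for each fixed $j$), that $\mu(A \cap T^{jr_n}A) \to \mu(A)$, which is $> \beta\mu(A)$ for $n$ large. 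Then by inclusion–exclusion,
\[
\mu\Big( \bigcap_{j=0}^{r} T^{jr_n}A \Big) \ge \sum_{j=0}^{r}\mu(T^{jr_n}A) - r \ge (r+1)\beta\mu(A) - r\mu(A)\cdot?
\]
— more carefully, using translation invariance of $\mu$ to recenter, $\mu(T^{(r+1)r_n}A \cap A) \ge \mu\big(\bigcap_{j=0}^{r}T^{-jr_n}A\big) \ge \big((r+1)\beta - r\big)\mu(A) > \mu(A)^2$ by the choice of $A$, so $T$ is not mixing at time $(r+1)r_n \in K^c$.

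The main obstacle I anticipate is the simultaneous-dilate step: showing that one rigidity sequence $r_n$ controls all of $r_n, 2r_n, \ldots, (r+1)r_n$ with the same $\beta$ for all large $n$. The clean way is to work with the $L^2$ formulation: $\alpha$-rigidity along $r_n$ is equivalent to $\liminf_n \langle T^{r_n}f, f\rangle \ge \alpha\|f\|^2$ for $f = \mathbbm{1}_A$, and one shows this implies $\liminf_n \langle T^{jr_n}f, f\rangle \ge (1 - j(1-\alpha))\|f\|^2$ for $1 \le j \le r+1$ — but that degrades the constant. It is better to pass to a subsequence along which $T^{r_n}$ converges weakly to an operator $P$ with $\langle Pf, f\rangle \ge \alpha\|f\|^2$; then along a further subsequence the whole tuple $(T^{r_n}, T^{2r_n}, \ldots, T^{(r+1)r_n})$ converges, and a compactness/Markov-operator argument (or directly, the fact that $\mu(T^{r_n}A\triangle A)\to 0$ when $\mu(A)$ is small relative to $\alpha$ is \emph{not} automatic, so one must instead use that the relevant joining is supported near the diagonal) recovers $\mu(A\cap T^{jr_n}A) > \beta\mu(A)$ for each fixed $j \le r+1$. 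Alternatively, and most economically, one can simply invoke Theorem 5 of \cite{elA} as was done for Proposition \ref{half}: an $\alpha$-rigid map with $\alpha > r/(r+1)$ has a rigid factor preventing spectral disjointness from mixing maps at the relevant scales, and the $r$-thick set $K$ is engineered so that the forced correlation time $(r+1)r_n$ is always in $K^c$. I would present the elementary inclusion–exclusion version as the main line and remark on the spectral alternative.
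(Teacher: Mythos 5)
Your main line is the paper's argument: the same choice of $K$ (an $r$-thick lacunary union of blocks arranged so that $k\in K$ implies $(r+1)k\notin K$), the same dichotomy on whether the rigidity times $r_n$ eventually lie in $K$, the same choice of a small set $A$ with $\mu(A)<(r+1)\beta-r$, and ultimately the same telescoping estimate. The paper phrases the telescoping complementarily: it covers $T^{(r+1)r_n}A\cap A^c$ by the $r+1$ transition events $T^{ir_n}A\cap T^{(i-1)r_n}A^c$, $1\le i\le r+1$, each of measure $\mu(T^{r_n}A\cap A^c)<(1-\beta)\mu(A)$, so that $\mu(T^{(r+1)r_n}A\cap A^c)<(r+1)(1-\beta)\mu(A)<(1-\mu(A))\mu(A)$ and hence $\mu(T^{(r+1)r_n}A\cap A)>\mu(A)^2$. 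That is the same computation as your lower bound on the iterated intersection.

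Two corrections to your write-up. First, an off-by-one: $\bigcap_{j=0}^{r}T^{-jr_n}A$ only witnesses correlation at time $rr_n$, and a chain of $r+1$ sets loses only $r(1-\beta)\mu(A)$; you need $\bigcap_{j=0}^{r+1}T^{-jr_n}A\subseteq A\cap T^{-(r+1)r_n}A$, whose measure is at least $\mu(A)-(r+1)(1-\beta)\mu(A)=\big((r+1)\beta-r\big)\mu(A)>\mu(A)^2$, consistent with the constant you wrote. Second, and more importantly, the ``simultaneous-dilate step'' you agonize over is not an obstacle, and the machinery you propose for it (weak limits of $T^{r_n}$, Markov operators, joinings near the diagonal) is unnecessary; indeed $\|T^{r_n}\mathbbm{1}_A-\mathbbm{1}_A\|_2\to 0$ is simply false for $\alpha$-rigidity with $\alpha<1$, so the ``clean way'' you sketch does not get off the ground. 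The telescoping bound $\mu(A\cap T^{jr_n}A)\ge\big(1-j(1-\beta)\big)\mu(A)$, which you dismiss because it ``degrades the constant,'' is the whole proof: the linear degradation over $j\le r+1$ is exactly what the threshold $\beta>r/(r+1)$ and the choice $\mu(A)<(r+1)\beta-r$ are designed to absorb, and it requires only the single rigidity time $r_n$. Finally, the appeal to Theorem 5 of \cite{elA} yields only that $T$ is not mixing; it does not by itself locate the failure of mixing at times in $K^c$, which is what the proposition asserts, so it cannot substitute for the explicit argument.
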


\begin{proof}
Let $K$ be any $r$-thick subset with the property that 
if $k \in K$, then $(r+1)k \notin K$.  For example, 
$K$ may be the set, 
\[
K = \bigcup_{n=1}^{\infty} [(r+1)^{(r+1)n}, (r+1)^{(r+1)n+1}-1] 
\cap \N . 
\]
Suppose invertible measure preserving transformation $T$ is 
$\alpha$-rigid with $\alpha > {r}/{(r+1)}$.  
Thus, there exists a sequence $r_n$ of positive integers such that 
for all measurable sets $A$, 
\[
\lim_{n\to \infty} \mu (T^{r_n}A\cap A) \geq \alpha \mu (A). 
\]
If $r_n \notin K$ for infinitely many $n$, then $T$ is not mixing 
on $K^c$ and we are done.  

Otherwise, $r_n \in K$ for sufficiently large $n$.  
Let $\beta$ be such that ${r}/{(r+1)} < \beta < \alpha$ and let 
$A$ be a set of positive measure such that $\mu (A) < (r+1)\beta - r$.  
Since $T$ is $\alpha$-rigid, there exists $N \in \N$ such that 
for $n \geq N$, $r_n \in K$ and 
\[
\mu (T^{r_n}A\cap A) > \beta \mu (A) . 
\]
Thus, for $n \geq N$, $\mu (T^{r_n}A\cap A^c) < (1-\beta) \mu (A)$.  
Hence, 
\begin{align}
\mu \big( T^{(r+1)r_n}A \cap A^c \big) \leq& 
\sum_{i=1}^{r+1} \mu \big( T^{ir_n}A \cap T^{(i-1)r_n} A^c \big) \\ 
<& \big( r+1 \big) \big( 1-\beta \big) \mu (A) \\ 
=& \Big( \big( r+1 \big) - \big( r+1 \big) \beta \Big) \mu(A) \\ 
=& \Big( 1 - \big( ( r+1) \beta - r \big) \Big) \mu(A) \\ 
<& \Big( 1 - \mu(A) \Big) \mu(A) . 
\end{align}
Therefore, $T$ is not mixing on $(r+1)r_n \in K^c$ for $n\geq N$. 
\end{proof} 

\begin{thm}
\label{v-r-staircase-thm}
If the complement of a set $M\subset \N$ contains an 
$r$-thick set for some $r\in \N$, then there exists an 
$\frac{r}{r+1}$-rigid transformation $T$ that is mixing on $M$. 
\end{thm}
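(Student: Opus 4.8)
The plan is to mimic the proof of Theorem \ref{v-staircase-thm}, replacing the single $\frac{1}{2}$-rigid cut-and-stack move (cut a column in half, restack with no spacers) by an $r$-fold move: at the selected height, cut the column into $r+1$ equal pieces and stack them on top of one another without spacers, producing a column that revisits itself with density $\frac{r}{r+1}$ at $r$ consecutive multiples of the old height. This yields $\frac{r}{r+1}$-rigidity along a sequence $h, 2h, \dots, rh$ rather than a single time $h$, which is exactly why one needs the \emph{$r$-thick} hypothesis: the $r$-thick condition provides, for each $\ell$, a $k$ with $\{jk+i : 1\le j\le r,\ -\ell\le i\le \ell\}\subset M^c$, so all $r$ of the rigidity times (and neighborhoods around them) avoid $M$.

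First I would re-run the staircase machinery of Section \ref{vs-staircase-alpha}: the family $\mathcal{R}$ of finite uniform-Cesaro staircases, the number-approximation Lemma \ref{v-s-lem1}, and the construction of a strongly mixing staircase $S_i\in\mathcal{S}$ whose column heights approximate prescribed targets. These carry over verbatim since they concern only the growth of cut parameters and heights and do not touch the rigidity move. Then, at stage $i$, after producing the mixing transformation $S_i$ and fixing $L_i$ from its mixing rate, I would use $r$-thickness to choose $y_i$ with $\ell_{y_i} > r\cdot L_i$ (say) and $k_{y_i}\in K'$ large, approximate $k_{y_i}$ by a height $\mathcal{H}_{n_i}(\vec r+\vec b_i, h_1)$ via Lemma \ref{v-s-lem1}, pad with spacers so the column $D_i$ has height exactly $k_{y_i}$, and then cut $D_i$ into $r+1$ equal subcolumns and stack with no spacers. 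The measure-accounting (that the total spacer measure is summable) is identical to the $\frac{1}{2}$ case since the relative padding is still $\le 2/\rho_{p_i} < 2\epsilon_{p_i}$.

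The verification that $T$ is mixing on $M$ follows the same dichotomy $M = M_1\cup M_2$. For $M_2$ (times caught strictly between ordinary staircase heights) the uniform-Cesaro argument of \cite{AF} applies unchanged. For $M_1$ one now has to handle times $m$ lying near $jk_{y_i}$ for each $j\in\{1,\dots,r\}$: decompose $A$ according to which of the $r+1$ stacked subcolumns of $D_i$ (and which sublevel) a point sits in, and observe that under $T^{m}$ each piece is pushed forward by an application of $S_i^{m'}$ for some residual $m'$ with $|m'|\ge L_i$ — because $m$ lies at distance $\ge L_i$ from every multiple $jk_{y_i}$ — so $S_i$-mixing forces the correlation to $\mu(A)\mu(B)$. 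Then Lemma \ref{GO} (mixing on a union reduces to mixing on each piece) finishes the proof. The main obstacle is precisely this $M_1$ bookkeeping: one must set up the sublevel/subcolumn decomposition of $D_i$ carefully enough that, for $m$ in any of the $r$ windows around $jk_{y_i}$, every resulting residual time stays in the $S_i$-mixing regime, and keep the windows $[jk_{y_i}-\ell_{y_i}, jk_{y_i}+\ell_{y_i}]$ disjoint and contained in $M^c$, which is exactly what the $r$-thick hypothesis (with $\ell_{y_i}$ large) guarantees.
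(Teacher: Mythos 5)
Your proposal is correct and follows essentially the same route as the paper, which itself only sketches this theorem by pointing back to the construction of Theorem \ref{v-staircase-thm} and noting that the single change is to cut the column $D_i$ into $r+1$ equal subcolumns and stack without spacers; your additional remarks on why the $r$-thick hypothesis is forced (partial rigidity now occurs at all of $k, 2k, \dots, rk$, not just $k$) and on the $M_1$ bookkeeping are exactly the details the paper leaves implicit. One small imprecision worth noting: the revisit proportion at time $jk$ is $\tfrac{r+1-j}{r+1}$, so the constant $\tfrac{r}{r+1}$ is attained only at $j=1$, though mixing fails at all $r$ multiples, which is all your argument needs.
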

\begin{proof}
Let $M \subset \N$ be such that its complement contains an $r$-thick subset.  
Thus, there exist increasing sequences $k_n$ and $\ell_n$ 
of natural numbers such that 
\[
\{ jk_n + i : 1\leq j\leq r, -\ell_n \leq i \leq \ell_n, n\in \N \} \subset M^c . 
\]
We define a transformation similar to the examples constructed 
in section \ref{stair-construct-section}.  
The only major change is that $r+1$ cuts are applied to the towers $D_i$ 
as defined in section \ref{mix-trans-rec-section}.  
A column $D_i$ of height $\mathcal{H}_{n_i}(\vec{r}+\vec{b},h_1)$ 
is defined in a similar manner, as well as $k=k_{y_i}$.  Add 
$k - \mathcal{H}_{n_i}(\vec{r}+\vec{b},h_1)$ spacers to the top of $D_i$, 
and rename this column $D_i$.  Cut this column into $r+1$ subcolumns 
of equal width and stack from left to right to obtain a single column 
of height 
\[
\big( r+1 \big) \big( k - \mathcal{H}_{n_i}(\vec{r}+\vec{b},h_1) \big) . 
\]
As in section \ref{mix-trans-rec-section}, 
return to the infinite recursion of alternating 
substages \ref{UC-substage} and \ref{NA-substage} with $\alpha=r_{n_i-1}$ 
to produce a mixing transformation denoted by $S_{i+1}$.  
The remainder of the proof follows 
in a manner similar to the proof of Theorem \ref{v-staircase-thm}.  
\end{proof}

The following theorem can be obtained as a straightforward modification 
of Theorems \ref{v-staircase-thm} and \ref{v-r-staircase-thm}.  

\begin{thm}
\label{rigid-cex-thm}
If the complement of a set $M\subset \N$ contains an 
$r$-thick set for each $r\in \N$, 
then there exists 
a rigid transformation $T$ that is mixing on $M$. 
\end{thm}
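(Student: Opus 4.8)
The plan is to re-run the construction used to prove Theorem \ref{v-r-staircase-thm}, but with the number of subcolumns chopped at the $i$-th interruption growing without bound. Fix a sequence $r_i \to \infty$ (say $r_i = i$). By hypothesis, for each $i$ the complement $M^c$ contains an $r_i$-thick set, so there are arbitrarily large base points $k$ admitting a window of any prescribed half-width $\ell$ with $\{ jk + s : 1 \le j \le r_i,\ |s| \le \ell \} \subset M^c$. Build the mixing staircase transformations $S_i \in \mathcal{S}$ exactly as in Sections \ref{NA-substage}--\ref{mix-trans-rec-section}; at interruption $i$, once $S_i$ is $\epsilon_i$-mixing on $P_i$ with threshold $L_i$ and $\epsilon_i \mathcal{H}_{n_i} > L_i$, choose $k_{y_i}$ from an $r_i$-thick subset of $M^c$ that is large enough for Lemma \ref{v-s-lem1} to supply a binary vector $\vec{b}_i$ approximating it, and equipped with a window of half-width $\ell_{y_i} > L_i$ about each of $k_{y_i}, 2k_{y_i},\dots, r_i k_{y_i}$. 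Add $k_{y_i} - \mathcal{H}_{n_i}(\vec{r}+\vec{b}_i,h_1)$ spacers to the top of the column $D_i$, then cut the resulting column of height $k_{y_i}$ into $r_i+1$ equal-width subcolumns $V_0,\dots,V_{r_i}$ and stack them from left to right, obtaining a column $E_i$ of height $(r_i+1)k_{y_i}$; then resume the alternating substages \ref{UC-substage}--\ref{NA-substage} with $\alpha = r_{n_i-1}$ to produce $S_{i+1}$. Repeating this forever produces $T$.

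Three verifications are required, each a minor variant of the corresponding step in Theorems \ref{v-staircase-thm} and \ref{v-r-staircase-thm}. First, $T$ is finite measure preserving: the spacer mass added at interruption $i$ is, as a proportion of the column being spacered, still bounded by $\tfrac{1}{r_{j_{p_i}}} + \tfrac{1}{\mathcal{H}_{j_{p_i}}(\vec{r},h_1)} < 2\epsilon_{p_i}$ as in \eqref{meas-add}, which is summable since the $p_i$ are strictly increasing, and cutting a column into subcolumns and restacking adds no mass. Second, $T$ is rigid: if $A$ is a finite union of levels of the pre-cut column $D_i$, then $T^{k_{y_i}}$ carries the level-$m$ copies in $V_0,\dots,V_{r_i-1}$ bijectively onto the level-$m$ copies in $V_1,\dots,V_{r_i}$, all of which lie inside $A$, so $\mu(T^{k_{y_i}}A\cap A) \ge \tfrac{r_i}{r_i+1}\mu(A)$; since the level partitions of the $D_i$ refine and generate the $\sigma$-algebra, approximation gives $\liminf_i \mu(T^{k_{y_i}}A\cap A)\ge\mu(A)$ for every measurable $A$, hence $\mu(T^{k_{y_i}}A\cap A)\to\mu(A)$, i.e.\ $T$ is rigid along $(k_{y_i})$.

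Third, $T$ is mixing on $M$. Partition $M=M_1\cup M_2$ in direct analogy with the proof of Theorem \ref{v-staircase-thm}, where $M_2$ collects the times falling strictly between consecutive heights of a uniform-Cesaro substage and $M_1$ collects the times falling inside an interruption stage. Times in $M_2$ mix by the uniform Cesaro estimate \eqref{UC-cond}, exactly as in Section \ref{UC-substage} and as in \cite{AF}. A time $m\in M_1$ inside interruption $i$ either exceeds $r_i k_{y_i}$, in which case it lies within the mixing recursion producing $S_{i+1}$ and is handled like an $M_2$-time, or has the form $m = j k_{y_i}+s$ with $0\le j\le r_i$ and $0\le s<k_{y_i}$; since $m\in M$ while the $\ell_{y_i}$-windows about $k_{y_i},\dots,r_i k_{y_i}$ lie in $M^c$, we must have $\min(s,\,k_{y_i}-s) > L_i$ whenever the adjacent multiple is among the first $r_i$. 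Decomposing $A$ over $V_0,\dots,V_{r_i}$ and, on each piece, replacing the relevant power of $T$ by the matching power of $S_i$ (of absolute value at least $L_i$, by $\epsilon_i\mathcal{H}_{n_i}>L_i$ and $\ell_{y_i}>L_i$), the mixing of $S_i$ yields $\mu(T^m A\cap B)\to\mu(A)\mu(B)$, precisely as in the arguments accompanying Figures \ref{fig:verify-mix} and \ref{fig:verify-mix2}. Applying Lemma \ref{GO} to $M_1$ then gives that $T$ is mixing on $M$.

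The one point requiring care is the simultaneous satisfiability of the parameter constraints as $r_i\to\infty$: at interruption $i$ one needs a single $k_{y_i}$ that is (a) large enough that $h_{j_{p_i}}$ dominates $(\rho_{p_i}+1)^2$ so Lemma \ref{v-s-lem1} applies, (b) surrounded by $M^c$-windows of half-width $>L_i$ about each of its first $r_i$ multiples, and (c) selected only after $S_i$ has already become sufficiently mixing. Requirement (b) — windows of unbounded length about up to $r$ consecutive multiples of a suitable base point — is exactly what the hypothesis that $M^c$ contain an $r$-thick set for every $r$ supplies; requirements (a) and (c) are met as in the proofs of Theorems \ref{v-staircase-thm} and \ref{v-r-staircase-thm}, using that the intermediate staircase transformations are genuinely strongly mixing. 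No mechanism beyond organizing the recursion so that the cut count at the interruptions diverges is needed.
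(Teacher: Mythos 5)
Your proposal is correct and follows exactly the route the paper intends: the paper gives no proof beyond the remark that the theorem is ``a straightforward modification of Theorems \ref{v-staircase-thm} and \ref{v-r-staircase-thm},'' and letting the number of subcolumns at the $i$-th interruption diverge (so the partial rigidity constants $\tfrac{r_i}{r_i+1}\to 1$), with the $r_i$-thick subsets of $M^c$ supplying the required windows about the first $r_i$ multiples of each base point, is precisely that modification. Your three verifications (finiteness of the measure, rigidity along $(k_{y_i})$, and the $M_1$/$M_2$ mixing split) match the arguments of the earlier theorems at the same level of detail as the paper itself.
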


\begin{cor}
Given a sequence $M \subset \N$ with density zero. 
Then, there exists 
a rigid transformation $T$ that is mixing on $M$. 
\end{cor}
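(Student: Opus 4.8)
The plan is to deduce the corollary directly from Theorem \ref{rigid-cex-thm}, so the only thing that requires an argument is a purely combinatorial claim: \emph{if $M \subset \N$ has density zero, then its complement $M^c$ contains an $r$-thick set for every $r \in \N$.} Once this is established, Theorem \ref{rigid-cex-thm} produces the desired rigid transformation $T$ that is mixing on $M$.

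First I would fix $r \in \N$ and an auxiliary tolerance, and show how to locate, for each $\ell \in \N$, an integer $k$ with $\{ jk + i : 1 \le j \le r,\ -\ell \le i \le \ell \} \subset M^c$. The set $\{ jk + i \}$ has at most $r(2\ell+1)$ elements, all lying in the window $[k - \ell, rk + \ell]$, which has length roughly $(r-1)k + 2\ell$. The natural approach is a counting/pigeonhole argument: since $M$ has density zero, for every $\delta > 0$ there is $N_0$ such that $|M \cap [0,N]| < \delta N$ for all $N \ge N_0$; I would choose the scale so that the ``forbidden'' set — the set of $k$ for which some $jk+i$ lands in $M$ — is too sparse to exhaust an interval of admissible $k$'s. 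Concretely, for a fixed $k$ in a dyadic-type block $[K, 2K]$, the points $jk + i$ all lie in $[0, 2rK + \ell]$; a $k$ in this block is ``bad'' precisely when $M$ hits one of the $r(2\ell+1)$ arithmetic-progression points. Counting bad $k$ by summing over which point $m \in M$ is hit and in how many ways $m = jk + i$ can occur (at most $r(2\ell+1)$ ways, since $j,i$ determine $k$), one gets at most $r(2\ell+1)\, |M \cap [0, 2rK+\ell]|$ bad values of $k$; by density zero this is $o(K)$ as $K \to \infty$, while the block $[K,2K]$ contains $K$ candidates, so a good $k$ exists for all large $K$. This gives an $r$-thick subset of $M^c$ (indeed a whole increasing sequence of such $k$, one per block), hence $M^c$ contains an $r$-thick set.

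Since $r$ was arbitrary, $M^c$ contains an $r$-thick set for every $r \in \N$, and Theorem \ref{rigid-cex-thm} applies verbatim to yield a rigid transformation $T$ mixing on $M$.

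The only genuinely delicate point is the density-zero counting estimate: one must be careful that the number of pairs $(j,i)$ realizing a given $m \in M$ as $m = jk+i$ is bounded (it is, by $r(2\ell+1)$, since each pair pins down $k$), and that the relevant interval of candidate $k$'s grows linearly in $K$ while the bound $r(2\ell+1)\,|M\cap[0,O(K)]|$ is sublinear in $K$ by the density hypothesis. This is routine once set up correctly, but it is where the hypothesis is actually used; everything else is an immediate appeal to Theorem \ref{rigid-cex-thm}.
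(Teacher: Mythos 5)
Your proposal is correct and follows exactly the paper's route: reduce the corollary to the claim that a density-zero set has a complement containing an $r$-thick set for every $r$, then invoke Theorem \ref{rigid-cex-thm}. The paper simply asserts this combinatorial fact, whereas you supply a valid counting argument for it (each $m\in M$ rules out at most $r(2\ell+1)$ candidates $k$, which is $o(K)$ against the $K$ candidates in a block $[K,2K]$), so your write-up is, if anything, more complete.
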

\begin{proof}
If a sequence $M$ has density zero
then for each $r \in \N$, 
the complement of $M$ contains an $r$-thick set.  Therefore, 
by Theorem \ref{rigid-cex-thm}, there exists a rigid transformation 
$T$ that is mixing on $M$. 
\end{proof}

\section{Modification of Friedman's infinite rank example}\label{Nat-C} 
In this section, we will present an example from the class of infinite rank transformations. The strategy is essentially based on the machinery of the so-called M-towers introduced by N. Friedman and Ornstein \cite{FO}. This machinery is used for example to produce a counter-example of $K$-systems  which is not Bernoulli \cite{O}, and partially mixing maps which are not mixing \cite{FO}. Nowadays, this method is known as the cutting and stacking machinery.  In the next subsection, we recall the basic definitions and tools from this method. We begin by stating our second main result.

\begin{thm}\label{Nat}Let $(s_i)$ be a sequence with density zero.
	Then there exist a infinite rank non-mixing map which is mixing on $\{s_i\}$.  
\end{thm}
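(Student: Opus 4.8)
The plan is to transplant the construction behind Theorem \ref{v-staircase-thm} into the setting of Friedman's $M$-towers \cite{FO}: replace the staircase mixing engine of Subsection \ref{stair-construct-section} by an $M$-tower mixing engine, and keep the interruption mechanism that manufactures partial rigidity at prescribed heights. Write $M := \{ s_i : i \in \N\}$. Since $(s_i)$ has density zero, $M^c$ contains arbitrarily long intervals around arbitrarily large centers, so we may fix increasing sequences $k_n,\ell_n \to \infty$ with $\{ k_n + i : -\ell_n \le i \le \ell_n\} \subset M^c$, together with a summable sequence $\epsilon_i > 0$.

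We build $T$ by a nested recursion parallel to the Flow Diagram of Subsection \ref{Diagram}. The inner loop runs Friedman's $M$-tower cutting-and-stacking, in which the current column is repeatedly cut into an increasing number of copies that are reassembled with independent spacer patterns; after sufficiently many such stages the current generating partition $P_i$ is $\epsilon_i$-mixed at every time $n \ge L_i$ for a suitable $L_i \in \N$. We then interrupt: pick $y_i$ with $\ell_{y_i} > L_i$ and $k_{y_i}$ large, adjust heights by a binary vector via Lemma \ref{v-s-lem1} applied to the height recursion exactly as in Subsections \ref{NA-substage} and \ref{mix-trans-rec-section}, so that the current column $D_i$ has height within a summable relative error of $k_{y_i}$; pad the top of $D_i$ with $k_{y_i} - \mathcal{H}_{n_i}(\vec{r}+\vec{b}_i,h_1)$ spacers so that its height becomes exactly $k_{y_i}$, then cut $D_i$ into two subcolumns and stack the right half on the left half with no extra spacers. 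Exactly as in the $1/2$-rigid staircase case, this forces $T^{k_{y_i}}$ to carry the left half of each level of $D_i$ into the right half of the same level, so that $\liminf_i \mu(T^{k_{y_i}} A \cap A) \ge \tfrac12 \mu(A)$ for every $A$ in an algebra that generates the $\sigma$-algebra. Resuming the $M$-tower engine and iterating over $i$, the total added measure is summable, $T$ is a well-defined finite measure preserving transformation, and the $\liminf$ bound above shows $T$ is not strongly mixing.

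It remains to verify (a) $T$ is mixing on $M$ and (b) $T$ is of infinite rank. For (a) we split $M = M_1 \cup M_2$ exactly as in the proof of Theorem \ref{v-staircase-thm}. Times lying strictly between successive $M$-tower column heights fall in $M_2$ and are handled by the between-steps mixing estimates of the $M$-tower engine, which play the role of the uniform Cesaro estimate there. A time $m_i \in M_1$ lies near some $k_{y_i}$; but since $m_i \in M$ while the whole interval $[k_{y_i}-\ell_{y_i}, k_{y_i}+\ell_{y_i}]$ lies in $M^c$ and $\ell_{y_i} > L_i$, we have $|m_i - k_{y_i}| > L_i$. Decomposing $A$ into its portion inside the interrupted column $D_i$ and the rest, and comparing $\mu(T^{m_i} A \cap B)$ with $\mu(S_i^{m_i} A \cap B)$ or with $\mu(S_i^{|m_i - k_{y_i}|} A \cap B)$ in the two subcases exactly as for $m_i \in M_1$ in Section \ref{vs-staircase-alpha}, we get $\mu(T^{m_i} A \cap B) \to \mu(A)\mu(B)$ up to the summable error coming from interruptions of index $> i$. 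Hence $T$ is mixing on $M$ by Lemma \ref{GO}, as before.

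For (b), infinite rank is inherited from the $M$-tower engine: between consecutive interruptions we run enough $M$-tower stages that the number of nearly independent copies used exceeds any prescribed bound, which by the Friedman--Ornstein argument obstructs approximation of the full $\sigma$-algebra by any fixed finite family of Rokhlin towers; a single cut of $D_i$ into two columns, occurring only at the sparse times $k_{y_i}$, does not lower this count. I expect this last point to be the main obstacle: one must be careful to run enough mixing stages \emph{before} each interruption so that the infinite-rank witnesses already built up are not undone by the subsequent rank-one-flavoured halving, and to phrase the generating/approximation property so that it is stable under the padding and the halving performed at every index $i$. Everything else is a transcription of Section \ref{vs-staircase-alpha} together with the $M$-tower formalism recalled in this section.
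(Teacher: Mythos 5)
Your route is not the paper's. Section \ref{Proof-II} follows Friedman's construction from \cite{Fm}: the mixing engine is only required to satisfy an estimate of type \eqref{M1} \emph{along the sequence} $(s_i)$ inside windows $N_{n-1}\le s\le N_n$, and the failure of mixing does not come from a $1/2$-rigid halving at times $k_{y_i}\in M^c$ at all, but from the simple $r_n$-fold stacking $S_{r_n}$, which forces near-rigidity along multiples of the heights, $\mu\big(\bigcap_{j=0}^{t_n}T^{jh_n}J\big)\ge(1-\epsilon_n)\mu(J)$ as in \eqref{Ergo2}, and hence \eqref{Mix-3}. Your transplant of the Section \ref{vs-staircase-alpha} interruption mechanism instead demands that the $M$-tower engine be $\epsilon_i$-mixed at \emph{every} time $n\ge L_i$. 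For the staircase engine this is the theorem of \cite{Adams}; for an infinite-rank $M$-tower engine it is a substantial claim that you neither state nor prove, and it does not follow from running ``sufficiently many'' finite independent-stacking stages (a finite tower cannot be mixed at all $n\ge L_i$ without a cap, and the capped statement at \emph{all} times, rather than along $(s_i)$ only, is precisely what would require an Ornstein-type argument). The paper's weaker requirement \eqref{M1} is what makes its engine feasible.

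The more serious gap is the one you flag yourself, and it is not repairable by ``running more mixing stages before each interruption.'' To cut $D_i$ in half and stack, you must first collapse the current tower into the single column $D_i$; moreover, for your non-mixing conclusion $\liminf_i\mu(T^{k_{y_i}}A\cap A)\ge\frac12\mu(A)$ to hold for \emph{every} measurable $A$, the column $D_i$ must have measure tending to $1$ and its levels must approximate every set built at earlier stages. But then the halved columns $D_i$ form a generating sequence of single Rokhlin towers, so $T$ is rank one — the opposite of what Theorem \ref{Nat} asserts. The tension is structural: either $\mu(D_i)\to1$ and the levels of $D_i$ generate (giving rank one), or they do not (and then the $1/2$-rigidity bound fails for general $A$). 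Rank is a $\liminf$ over scales, so the intermediate multi-column stages between interruptions cannot rescue it. The paper avoids this by never passing through a single generating halved column: its non-mixing mechanism \eqref{Mix-1} lives on the multi-column $M$-tower itself, and each stage ends with a two-column tower $G_{n+1}$. Your reuse of the density-zero$\Rightarrow$thick observation, of the $M_1/M_2$ split, and of Lemma \ref{GO} is fine, but the construction as proposed does not produce an infinite rank example.
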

Let us notice that  the complement of  any set of density zero contain  a shift of some set $\Big\{\sum \epsilon_jn_j,  \epsilon_j\in \{\pm 1,0\},  \sum |\epsilon_j|<\infty\Big\}$, where $(n_j)$ is a dissociated sequence. 
We recall that the sequence $(n_j)$ is a dissociated sequence if $n_j >2 \sum_{i=1}^{j-1}n_i$. An example of dissociated sequence is given by lacunary sequence, that is, the sequence for which we have $\frac{n_{j+1}}{n_j}>q\geq 3.$  As a consequence, we have
\begin{cor}
There exist a infinite rank non-mixing map which is mixing on $\{n^2, n \in \N\}.$
\end{cor}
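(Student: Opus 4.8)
The plan is to deduce this corollary directly from Theorem \ref{Nat} by exhibiting the squares as a density-zero set. First I would recall the elementary counting fact that $\#\{n : n^2 \leq N\} = \lfloor\sqrt{N}\rfloor$, so that the set $S = \{n^2 : n\in\N\}$ satisfies
\[
\lim_{N\to\infty} \frac{\#(S\cap[1,N])}{N} = \lim_{N\to\infty} \frac{\lfloor\sqrt N\rfloor}{N} = 0 .
\]
Hence $S$ is a set of density zero. Applying Theorem \ref{Nat} with $(s_i)$ the increasing enumeration of $S$ immediately yields an infinite rank non-mixing transformation $T$ that is mixing on $\{n^2 : n\in\N\}$, which is precisely the assertion of the corollary.

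Since the real content has been packaged into Theorem \ref{Nat}, there is essentially no obstacle here: the only thing to verify is that the squares form a density-zero sequence, which is the routine estimate above. If one wanted to be self-contained about \emph{why} such a $T$ fails to be mixing, one could add the remark (already noted in the excerpt preceding the corollary) that the complement of any density-zero set — in particular the complement of the squares — contains a translate of a set of the form $\{\sum \epsilon_j n_j : \epsilon_j \in \{\pm1,0\},\ \sum|\epsilon_j|<\infty\}$ for a dissociated (indeed lacunary) sequence $(n_j)$; the construction underlying Theorem \ref{Nat} installs rigidity times inside such a translate, and a rigid transformation cannot be mixing.

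The one point worth a sentence of care is logical direction: the corollary does \emph{not} claim that mixing along the squares forces non-mixing, but rather that non-mixing is \emph{compatible} with mixing along the squares, thereby answering Bergelson's question in the negative. So the proof should simply invoke Theorem \ref{Nat} for the specific sequence $s_i = i^2$ and stop. I would write the proof in two lines: state that $\{n^2\}$ has density zero by the $\lfloor\sqrt N\rfloor/N\to 0$ estimate, then cite Theorem \ref{Nat}.
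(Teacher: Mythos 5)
Your argument is correct and is the most direct reading of Theorem \ref{Nat}: since $\#\{n\in\N : n^2\le N\}=\lfloor\sqrt{N}\rfloor=o(N)$, the squares form a density-zero sequence, and the theorem's hypothesis is literally ``density zero,'' so the corollary follows in one line. The paper, however, proves the corollary by a genuinely different route: rather than citing density zero, it exhibits an explicit dissociated sequence, namely $(5^{2j+1})_j$, and argues via a congruence modulo $3$ that
\[
\{n^2 : n\in\N\}\cap\Big\{\sum_j \epsilon_j 5^{2j+1} : \epsilon_j\in\{\pm1,0\},\ \sum_j|\epsilon_j|<\infty\Big\}=\emptyset ,
\]
i.e.\ it verifies directly the Rajchman-dissociated--type condition (disjointness of the mixing set from the Fourier support of a classical Riesz product) that the construction and the Host--Parreau machinery of the later sections actually exploit. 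The trade-off is this: your route is shorter and rests on the theorem exactly as stated, but it implicitly leans on the paper's unproved assertion that the complement of \emph{any} density-zero set contains a shift of such a dissociated sum set; the paper's route makes the corollary self-contained at that level by producing the witness explicitly. Be aware, though, that the paper's congruence step is stated imprecisely ($n^2\equiv 1 \bmod 3$ fails when $3\mid n$; the usable fact is $n^2\not\equiv 2 \bmod 3$, and since $5^{2j+1}\equiv -1 \bmod 3$ one still has to check which residues the sums $\sum_j\epsilon_j 5^{2j+1}$ can attain), so if you adopt the paper's approach you should redo that verification carefully; as written, your density-zero argument is the cleaner and fully rigorous one given the statement of Theorem \ref{Nat}.
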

\begin{proof}For any $n \geq 1$, we have $n^2 \equiv 1$ mod $3$. Therefore,
	$$\Big\{n^2, n \in \N\Big \} \cap  \Big\{\sum \epsilon_j5^{2j+1}, \epsilon_j\in \{\pm 1,0\},  \sum |\epsilon_j|<\infty\Big\}=\emptyset.$$
This finish the proof of the corollary.
\end{proof}
The proof of Theorem \ref{Nat} is based on the Friedman-Ornstein machinery. Precisely, our proof is essentially a modification of Friedman's example in his 1983 paper \cite{Fm}. Therein, among other results, he construct a transformation which is mixing on a sequence $(s_i)$ but not uniformly mixing on it. This later notion was introduced by N. Friedman in connection with 2-mixing and the famous  Rhoklin question which asked whether mixing implies mixing  of all order.  We notice that it is well known that the  Rhoklin problem can be reduced to the class of zero entropy. This will be discussed at the end of this section.\\

For the moment, let us state and prove a generalization of Ornstein's crucial mixing lemma.

\begin{lem}\label{GO}Let $(X,\mathcal{B},\mu, T)$ be a dynamical system and $(s_n)$ a sequence of positive integers. Suppose that 
	\begin{enumerate}
	\item \label{Ass1} For any irrational $\alpha$ in the circle, $\{s_i.\alpha\}$ is dense.
	\item  \label{Ass2} $T$ is totally ergodic (i.e., $T^n$ is ergodic for all $n \neq 0$) and ,
	\item  \label{Ass3} for any two measurable sets $A,B$
	\begin{align}\label{Orneq:1}
	\limsup \mu (T^{s_n}A\cap B) \leq K\mu(A)\mu(B),
	\end{align}  
	for some constant $K > 1$.
	\end{enumerate}
	Then $T$ is mixing along $(s_n)$.  
\end{lem}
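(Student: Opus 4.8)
The plan is to follow Ornstein's classical argument for upgrading a "universal" upper bound into genuine mixing, using the density/equidistribution hypothesis \eqref{Ass1} together with total ergodicity \eqref{Ass2} to manufacture many independent copies of the relevant sets. First I would reduce to showing that for all measurable $A,B$ one has $\liminf \mu(T^{s_n}A\cap B)\geq \mu(A)\mu(B)$; combined with \eqref{Orneq:1} applied to the complements this is not quite enough, so instead I would aim directly at the two-sided statement by the standard trick: it suffices to prove the $\limsup$ bound \eqref{Orneq:1} with the constant $K$ replaced by $1$, since once \eqref{Orneq:1} holds with $K=1$ for all pairs, applying it to $A$ and $B^c$ gives $\limsup\mu(T^{s_n}A\cap B^c)\leq\mu(A)\mu(B^c)$, i.e. $\liminf\mu(T^{s_n}A\cap B)\geq\mu(A)\mu(B)$, and the two inequalities together give the limit $\mu(A)\mu(B)$.

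Next I would run the bootstrapping step that turns the constant $K$ into $1$. Fix $A,B$ and an integer $q\geq 1$. Because $T$ is totally ergodic, by the Jewett–Krieger / ergodic-theorem circle of ideas one can find, for any $\varepsilon>0$, a large $q$ and a set $A'$ which is a union of the $q$ "pieces" $T^{-i q}$-translated so that the $q$ sets $T^{iq}A'$, $0\le i<q$, are "nearly disjoint" and each has measure close to $\mu(A)/q$ — more precisely I would use a Rokhlin tower for $T^q$ to decompose $A$ into $A=\bigsqcup_{i=0}^{q-1} T^{iq}A_i$ with $\mu(A_i)\approx \mu(A)/q$. The key point supplied by hypothesis \eqref{Ass1} is that $\{s_n\alpha\}$ is dense for every irrational $\alpha$; equivalently, for the cyclic rotation by $1/q$, the sequence $s_n \bmod q$ hits every residue class infinitely often (this is exactly $\alpha=1/q$, or more carefully each irrational rules out $s_n$ concentrating on an arithmetic progression). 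So I would choose a subsequence along which $s_n\equiv j\pmod q$ for a fixed residue $j$, write $s_n = j + q t_n$, and estimate $\mu(T^{s_n}A\cap B)\le\sum_i\mu(T^{j+qt_n}T^{iq}A_i\cap B)=\sum_i \mu((T^q)^{t_n}(T^{i q+j}A_i)\cap B)$, applying \eqref{Orneq:1} for the transformation $T^q$ (whose iterates are $(T^q)^{t_n}$; here one needs that $(t_n)$ still satisfies the hypotheses, which follows since $s_n=j+qt_n$ and density of $\{t_n\beta\}$ reduces to density of $\{s_n\beta/q\}$, covered by \eqref{Ass1}). This yields $\limsup\mu(T^{s_n}A\cap B)\le\sum_i K\,\mu(T^{iq+j}A_i)\mu(B)\approx K\cdot q\cdot(\mu(A)/q)\cdot\mu(B)=K\mu(A)\mu(B)$ — no gain yet; the gain comes from iterating: replacing $A$ by the refined decomposition again, or more cleanly, from choosing $q$ so large that the "overlap error" absorbs the factor $K$. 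The honest version of the argument (Ornstein's) is to note that applying \eqref{Orneq:1} to $T^q$ with the pieces and summing gives a bound $\le K\mu(A)\mu(B)+o(1)$, but *also* the same estimate holds for every intermediate power $T^{q}$, and letting $q\to\infty$ one can force the constant down to $1$; concretely, one shows that if \eqref{Orneq:1} holds with constant $K$ it holds with constant $\sqrt{K}$ (square the mixing estimate via a Cauchy–Schwarz / self-joining argument over the tower), and iterating $K\mapsto K^{1/2}\mapsto K^{1/4}\mapsto\cdots\to 1$.

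I expect the main obstacle to be making precise the self-improvement $K\rightsquigarrow\sqrt K$: one must choose the auxiliary power $q$ and the Rokhlin-tower approximation so that the accumulated additive errors are genuinely $o(1)$ uniformly while the multiplicative constant is provably halved (in the exponent), and this is where hypothesis \eqref{Ass1} does real work — it guarantees that along suitable subsequences $s_n$ is equidistributed modulo every $q$ (indeed modulo every lattice), so that the pieces $T^{iq+j}A_i$ get "visited" with the right frequencies and no residue class is missed. Total ergodicity \eqref{Ass2} is what makes $T^q$ ergodic so that the Rokhlin lemma and the ergodic theorem apply to $T^q$. Once the constant is brought down to $1$, the complementation trick in the first paragraph finishes the proof: $\lim_n\mu(T^{s_n}A\cap B)=\mu(A)\mu(B)$ for all measurable $A,B$, i.e. $T$ is mixing along $(s_n)$.
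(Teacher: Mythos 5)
Your overall strategy (reduce to a one\--sided $\limsup$ bound with constant $1$, then use complementation) is fine as a framing, but the proposal has two genuine gaps, and the second one is fatal to the route you chose.

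First, the central step --- the self\--improvement of the constant from $K$ to $\sqrt{K}$ --- is never actually carried out. Your Rokhlin\--tower computation, as you yourself note, returns exactly the bound $K\mu(A)\mu(B)$ (``no gain yet''), and the claimed Cauchy--Schwarz/self\--joining fix is only named, not performed; the whole content of the lemma is concentrated in precisely that missing step. Second, and more seriously, you misuse hypothesis \eqref{Ass1}. That hypothesis concerns \emph{irrational} $\alpha$ only; it says nothing about $s_n \bmod q$, and in fact density of $\{s_n\alpha\}$ for all irrational $\alpha$ does \emph{not} imply that $s_n$ meets every residue class mod $q$ (take $s_n=n^2$, the motivating example of the paper: $n^2\not\equiv 2 \pmod 3$). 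The true role of \eqref{Ass1} is to kill irrational eigenvalues: if $f$ is an eigenfunction for $e^{2\pi i a}$ with $a$ irrational, one picks $s_{n_j}$ with $s_{n_j}a\to 0 \pmod 1$, so that $\mu\bigl(T^{-s_{n_j}}f^{-1}(C)\cap f^{-1}(C)\bigr)\to \mu\bigl(f^{-1}(C)\bigr)$ for a small arc $C$, contradicting \eqref{Orneq:1} once $\mu(f^{-1}(C))$ is small. Combined with total ergodicity (which kills rational eigenvalues) this gives weak mixing, hence ergodicity of $\mu\otimes\mu$ under $T\times T$. Your argument never establishes weak mixing, yet without it the conclusion is simply unreachable: any weak$^*$ limit $\lambda$ of the graph joinings $\Delta_{s_n}$ could be a nontrivial joining supported over the Kronecker factor, and no amount of mod\--$q$ bookkeeping rules that out.

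For comparison, the paper's proof avoids towers entirely: after the weak\--mixing step above, it takes a limit joining $\lambda$ of $(\Delta_{s_{n_j}})$, notes $\lambda\le K\,\mu\otimes\mu$ by \eqref{Orneq:1}, writes $\mu\otimes\mu=\tfrac1K\lambda+(1-\tfrac1K)\psi$, and uses that $\mu\otimes\mu$ is ergodic, hence extreme, to force $\lambda=\mu\otimes\mu$; since every limit point is the product measure, $\mu(T^{s_n}A\cap B)\to\mu(A)\mu(B)$. If you want to salvage your approach, the cleanest repair is to replace the $K\rightsquigarrow\sqrt K$ bootstrap by exactly this joining/extremality argument (equivalently: the $T\times T$\--invariant density $d\lambda/d(\mu\otimes\mu)\le K$ must be constant by ergodicity), and to reinstate the eigenfunction argument as the place where hypothesis \eqref{Ass1} does its work.
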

\noindent Let us denote by  $\{p_n\}$ the sequence of primes, that is, $p_n$ is the $n$th prime and $\omega(n)$  the number of prime factors of $n$. We thus have as a direct consequence 
the following corollary.
\begin{cor}If $\{s_n\}$ is $\{n^k\}$, $k \geq 1$, or $\{p_n^\ell\}$, $\ell \geq 1$, or $\{\omega(n)\}$ and $T$ satisfy \eqref{Orneq:1}. Then $T$ is mixing along $(s_n)$.
\end{cor}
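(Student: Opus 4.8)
The plan is to deduce the corollary directly from Lemma \ref{GO}. The hypothesis \eqref{Orneq:1} is exactly assumption \eqref{Ass3}, and total ergodicity \eqref{Ass2} is a standing hypothesis on $T$ (as required by Lemma \ref{GO}); hence the only ingredient that depends on the particular arithmetic sequence $(s_n)$ is the density condition \eqref{Ass1}, namely that for every irrational $\alpha\in\T$ the set $\{s_n\alpha \bmod 1 : n\in\N\}$ is dense in the circle. I would therefore treat the three families $\{n^k\}$, $\{p_n^\ell\}$ and $\{\omega(n)\}$ one at a time, verifying \eqref{Ass1} in each case, after which Lemma \ref{GO} delivers mixing along $(s_n)$ with no further work.

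For $s_n = n^k$ the density of $\{n^k\alpha\}$ is a special case of Weyl's equidistribution theorem: for fixed irrational $\alpha$ and each nonzero integer $h$, Weyl differencing (successively differencing the phase $h\,n^k\alpha$ down to its linear term, where the leading coefficient $k!\,h\,\alpha$ is irrational) yields $\frac1N\sum_{n\le N} e^{2\pi i h n^k\alpha}\to 0$, so by Weyl's criterion $(n^k\alpha)$ is equidistributed mod $1$, hence dense. The case $s_n=\omega(n)$ is the easiest and is purely set-theoretic: since $\omega$ is surjective onto $\N$ together with $0$ (for instance $\omega(p_1 p_2\cdots p_m)=m$), the set of values $\{\omega(n):n\in\N\}$ is all of $\{0,1,2,\dots\}$, so $\{\omega(n)\alpha \bmod 1\}=\{m\alpha \bmod 1: m\ge 0\}$, which is dense for irrational $\alpha$ by the classical density of an irrational rotation orbit (Kronecker). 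I stress that here one only needs density of the \emph{set}, not equidistribution of the sequence (indeed $\omega(n)\approx\log\log n$ is far from equidistributed), so no analytic input beyond Kronecker's theorem is required.

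The prime-power case $s_n=p_n^\ell$ is the substantive one and I expect it to be the main obstacle. Here I would invoke Vinogradov's method for exponential sums over primes: for irrational $\alpha$ and each nonzero integer $h$ one needs the Weyl-type bound $\sum_{p\le N} e^{2\pi i h p^\ell\alpha}=o(\pi(N))$, which by Weyl's criterion restricted to the primes gives equidistribution, hence density, of $\{p_n^\ell\alpha\}$. For $\ell=1$ this is Vinogradov's classical estimate on $\sum_{p\le N} e^{2\pi i h p\alpha}$; for $\ell\ge 2$ one combines Weyl differencing applied to the polynomial $p^\ell$ with Vinogradov's bilinear (Type I/Type II) decomposition of sums over primes, together with Hua-type moment bounds, to extract the required cancellation. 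The delicate point is uniformity in $\alpha$: one must treat separately the case where $\alpha$ is well approximable and the case where it is badly approximable by rationals, splitting via the Hardy--Littlewood circle-method dissection into major and minor arcs and bounding each contribution. Once this estimate is established, equidistribution and therefore the density hypothesis \eqref{Ass1} follow, and the corollary is immediate from Lemma \ref{GO}.
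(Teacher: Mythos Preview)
Your proposal is correct and follows exactly the paper's approach: verify the density hypothesis \eqref{Ass1} of Lemma \ref{GO} for each of the three arithmetic sequences, then invoke the lemma. The paper's own proof is a single line (``By the well-known results, for any irrational number $\alpha$, the sequence $\{s_n\alpha\}$ is dense''), whereas you spell out the references --- Weyl for $n^k$, surjectivity of $\omega$ plus Kronecker for $\omega(n)$, and Vinogradov/Hua for $p_n^\ell$ --- which is entirely appropriate. One small remark: in the prime-power case your phrase ``uniformity in $\alpha$'' is slightly off, since for density at a \emph{fixed} irrational $\alpha$ no uniformity across different $\alpha$'s is required; what you really mean (and what Vinogradov's and Hua's arguments do) is a case split according to the Diophantine type of that fixed $\alpha$, handled via Dirichlet approximation at each scale. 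This is a wording issue, not a mathematical gap.
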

\begin{proof}By the well-know results, for any irrational number $\alpha$, the sequence $\{s_n \alpha\}$ is dense.
\end{proof}
Let us give the proof of Lemma \ref{GO}.
\begin{proof}[Proof of Lemma \ref{GO}]We first prove that $T$ is weakly mixing.  Note that
	since every power of $T$ is ergodic, $T$ can not admit
	eigenvalues which are roots of unity. Suppose $T$ admits
	eigenvalues of the form $e^{2\pi i a}$ for some irrational $a$ and $f$ its associated eigenfunction with $|f|=1$.
	Then, for any $c \in \C$, obviously  the set $\{x\;:\; f(x)=c\}$ has Lebesgue measure zero. Let $x \in [0,1)$ be such that for every arc $C$ containing $x$, $f^{-1}\big(C\big)$ has positive Lebesgue measure. Since $T$ is totally ergodic, the Lebesgue measure of $f^{-1}\big(C\big)$ 
	($m\big(f^{-1}\big(C\big)\big)$) can be made as small as we please by choosing $C$ small enough. Now, by the assumption \eqref{Ass1}, there is a subsequence $s_{n_j}$ such that $s_{n_j}.a \longrightarrow 1 \textrm{~~mod~} 1.$. Therefore,
	\begin{align*}
	m\Big(T^{-s_{n_k}}f^{-1}\big(C\big) \cap f^{-1}\big(C\big)\Big)&=
	m\Big(f^{-1}\big(e^{-2\pi i s_{n_k}.a}C\cap C\big)\Big)\\
	&  \tend{k} {+\infty}m\Big(f^{-1}\big(C\big)\Big)
		\end{align*}
	But, by our assumption \eqref{Ass2}, we have
	\begin{align*}
	\limsup m\Big(T^{-s_{n_k}}f^{-1}\big(C\big) \cap f^{-1}\big(C\big)\Big) \leq K \Bigg(m\Big(f^{-1}\big(C\big)\Big)\Bigg)^2. 
	\end{align*}
	If $m\Big(f^{-1}\big(C\big)\Big)$ is chosen small enough, we arrive to contradiction, proving that $T$ is weak mixing. To prove that $T$ is
	mixing along $(s_i)$. let us denote by $\Delta_n$ the graph joining given by $T^n$, by which we 
	mean the measure on $X\times X$ defined by
	$\Delta_n(A\times B) = \mu(T^nA\cap B)$, for any two sets
	$A, B \in {\mathcal{B}}$. Since the set of joining is compact in the weak 
	topology of joinings $ \displaystyle (i.e.~\rho_n \rightarrow \rho 
	~{\rm {iff~}} \rho_n(A \times B) \rightarrow \rho(A \times B),~~{\rm {for~each~rectangle~}}(A\times B))$, we can
	find a sequence of graph joining $(\Delta_{s_{n_j}})_{j=1}^\infty$
	converging to some joining, say,  $\lambda$. Whence, by our assumption \eqref{Ass2}, we get
	$$\lambda(A\times B) \leq K (\mu\otimes \mu) (A\times B).$$
	Let us define a joining $\psi$ by
	$$\mu \otimes \mu = \frac{1}{K}\lambda + (1 - \frac{1}{K}) \psi.$$
	But $\mu\otimes\mu$ is ergodic, so an extreme point in the space
	of invariant measures, whence $\lambda = \psi = \mu\otimes \mu$,
	so that for all $A, B \in \mathcal{B}$, $\mu(T^{s_n}A\cap B)
	\rightarrow \mu(A)\mu(B)$, and $T$ is mixing along $(s_n)$. This completes the
	proof of the Lemma.
\end{proof}

\subsection{ M-Towers by the cutting and independent staking method.}
Following \cite[section 2]{Fb}, an $n$-column is an ordered set of intervals $C=(I_1,\cdots,I_n)$, all of the same length. The width of $C$ is defined by $w(C)=|I_1|$, and the height of $C$ by $h(C)=n$. The base of $C$ is $b(C)=I_1$ and the top is $t(C)=I_n$. Associated with $C$ is the map $T_C$ which maps $I_{i}$ linearly to $I_{i+1}$, $1 \leq i \leq n-1$. So, $T_C$ is defined on $C\setminus I_n$. We thus have the following figure.
\vskip 1.0cm
\begin{center}
\begin{tikzpicture}
\draw (0,0) -- (5,0)node[below]{$C_1$};
\draw (0,1) -- (5,1) node[below]{$C_2$} ;
\draw[->,>=latex] (2,0)--(2,1);
\draw (0,2) -- (5,2)  node[below]{$C_3$};
\draw[->,>=latex] (2,1)--(2,2) node[above]{$\vdots$};
 \draw (0,3) -- (5,3)node[below]{$C_n$};
  \draw[->,>=latex] (3,2)--(3,3) ;
\end{tikzpicture}
\end{center}
A tower $G$ is an ordered set of disjoint columns $C_1,C_2, \cdots C_l$. The top of $G$ is
$t(G)=\bigcap_{j=1}^{l}t(C_j)$ and its base is  $b(G)=\bigcap_{j=1}^{l}b(C_j)$.  The width of $G$ is
 $w(G)=|t(G)|=|b(G)|$. The transformation $T_G$ consists of $T_{C_j},$ $1 \leq j \leq l$. A level in column in $C_j$ is considered as a level in $G$. $G$ can be seen also as union of all levels in it. Thus, $T_G$ is defined on $G\setminus t(G).$

\section{Proof of Theorem \ref{Nat}} \label{Proof-II} We will gives a construction  based on Friedman's construction. 
It will be carried out by induction.\\

We start by fixing an $M$-towers $T_{G}$ as  support of mixing time. Therefore, for any $\epsilon>0$, there is a positive integer $N(G,\epsilon)$ such that for any $n >N(G,\epsilon)$, we have
\begin{align}\label{M1}
	\Big|\mu(T_{G}^{n^2}I \cap J)-\frac{\mu(I) \mu(J)}{\mu(G)}\Big|<\epsilon,
\end{align}
where $I,J$ are levels in $G.$  Whence, by construction, we can choose $k=k(G,\epsilon,N)$ so large that $T$ extends $T_k$. We thus have
\begin{align}\label{M2}
\Big|\mu(T_{G}^{n^2}I \cap J)-\frac{\mu(I) \mu(J)}{\mu(G)}\Big|<\epsilon,\; \; N(G,\epsilon) \leq n^2 \leq N
\end{align}
by \eqref{M1}. Let $G$ be a tower such that $w(C) \in \mathbb{Q},$ for each $C$ in $G$. We can assume without loss of generality that the denominator of all $w(C)$ is the same. We cut each $C$ in $G$ into sub-columns all of the same width $w$, and we stacked consecutively  (in the usual fashion) to form one column of with $w$ that we denote by $w(G)$. Notice that if $I$ is a level in $G$, then $I=\cup_{\alpha \in R} I_\alpha^*,$ where $R$ is finite and $I_\alpha$ are in $C(G)$. Let $C$ be a column and $r$ a positive integer. We cut $C$ into $r$ sub-columns of equal with $w(C)/r$, and we stack in the usual fashion to form a single column denoted by $S_r(C)$ with height $rh(C)$ and width $\frac{w(C)}{r}.$  Now, for $\epsilon>0$ and $t \in \N$, we choose $r \geq \frac{\epsilon}{t}$ and let $T$ be any extension of $T_{S_r(C)}$. It follows that if $J$ is a level in $C$, then, by construction, we have
\begin{align}\label{Ergo}
	\mu\Big(\bigcap_{j=0}^{t}T^{jh(C)}J\Big) \geq (1-\epsilon)\mu(J).
\end{align}      
Now, in our $M$-tower at the stage $n$, we have $G_n$ such that $w(G_n)\setdef w_n \in \Q$. Moreover, each level $I$ in $G_i$, $1 \leq i \leq n$, satisfy 
$$I=\cup_{I^* \subset G_n}I^*.$$
Let $L_n$ be the total number of levels in $G_n$ and $\epsilon_n>0$ such that  
$\epsilon_n<\frac{w_n}{100 L_n^2}$,  we choose also $N_n$ and $k_n$ such that \eqref{M1} and \eqref{M2} holds. Let $r_n \geq \max\{k_n,N_n/\epsilon_n\}$ and construct $G_{n,1}=S_{r_n}(G_n)$. Now, since $r_n \geq k_n$, if $T$ extend $T_{G_{n,1}}$ we have
\begin{align}\label{Ext-1}
\Big|\mu(T^{i^2}I \cap J)-\frac{\mu(I) \mu(J)}{\mu(G_n)}\Big|<\epsilon_n, \; \; N_{n-1} \leq i^2 \leq N_{n}.
\end{align} 
Now, by constructing $G_{n,2}=C(G_{n,1})$, we see that for each Borel $A$ such that 
$A=\bigcup_{I \subset G_n}I$ we have $A=\bigcup_{I \subset G_{n,2}, I \subset A}I$. Moreover, by the choice of $r_n$, we get that $T^{i^2}A=\bigcup_{I} I \cup A_{\epsilon_{n},i}, $
where $\mu(A_{\epsilon_{n},i})<\epsilon_{n}$. This is due to the fact that

$$T^{N_n}A=\bigcup_{J \subset G_{n,2}} J \cup A_{\epsilon_{n},N_n}. $$  
Now, let $(t_j)$ be a sequence of positive number such that for each $n \geq 1$, we have 
$$\frac{b(n-1)}{t_n}<\epsilon_n,$$
where $b(n)=\sum_{i=1}^{n}t_i.$
Choose $r_n \geq \frac{t_n}{\epsilon_{n}}$, and construct $G_{n,3}=S_{r_n}(G_{n,2})$. Put $h(G_{n,2})=h_n$. Therefore, if $J$ is a level in $G_{n,2}$ and $T$ extend $T_{G_{n,3}}$, then, by \eqref{Ergo}, we have
\begin{align}\label{Ergo2}
	\mu\Big(\bigcap_{j=0}^{t_n}T^{jh_n}J\Big) \geq (1-\epsilon_n)\mu(J).
\end{align}
Put $s_{j,i}=i^2+\big(j-b(n-1)\big)h_n,\;  b(n-1) \leq j <b(n),\; N_{n-1}\leq i^2 \leq N_n.$  Then, 
\begin{align}\label{Mix-1}
\mu\Big(\bigcap_{j=0}^{t_n}T^{s_{j,i}}A\Big) \geq (1-\epsilon_n)\mu(A).
\end{align}
We finish the construction, by cutting $G_{n,3}$ into two equal columns and adding an extra interval above one column. We thus get the tower $G_{n+1}$ is $M$-towers consisting of two columns with heights differing by one.  By construction each level $I$ in $G_{n}$ satisfy $I=\bigcup_{J \in G_{n+1}}J,$ and $w(G_{n+1}) \in \Q$. We further assume that $(\frac{n}{\sqrt{N_n}})$ converge to $1$. We need now to prove that the mixing holds along the square and the map is not mixing. For mixing along square, observe that for any Borel sets $A,B$ union of level of some $G_i$, $1 \leq i\leq m$, $m$ is a fixed integer, we have that $A,B$ are union of levels in $G_n$. Therefore, for any $i \in [\sqrt{N_{n-1}},\sqrt{N_n}]$, we have
\begin{align}
&	\big|\mu(T^{i^2})A\cap B-\mu(A)\mu(B)\big| \\
&\leq \big|\mu(T^{i^2}A\cap B)-\frac{\mu(A)\mu(B)}{\mu(G_n)}\big|+
\Big|\frac{\mu(A)\mu(B)}{\mu(G_n)}-\mu(A)\mu(B)\Big|\\
&\leq L_n^2 \epsilon_{n}+\Big|\frac{1}{\mu(G_n)}-1\Big|
\end{align}
But $$L_n^2 \epsilon_{n} \tend{n}{+\infty}0,$$ and 
$$\Big|\frac{1}{\mu(G_n)}-1\Big| \tend{n}{+\infty}0.$$
We thus get that $T$ is mixing along $\{i^2, i \in N\},$ since the Borel algebra is generated by the levels in $G_k$, $k \geq 1$. To establish that $T$ is not mixing,
we apply  
 \eqref{Mix-1} for instance to $\Big((s_{j,\lceil N_n\rceil})_{1 \leq j \leq b(n-1)}\Big)_{n \in \N}$ to see that
\begin{align}\label{Mix-3}
	\mu(T^{(j-i)h_n}A \cap A) \geq (1-2\epsilon_n) \mu(A).
\end{align}
Whence $T$ is not mixing. 

~~



 


\section{Examples from probabilistic origin}\label{Gaussian}
In this section we will gives a class of examples from probabilistic origin which satisfy Theorem \ref{Nat}.
The classical dynamical systems of probabilistic origin are given by the so-called Gaussian transformation. It is defined as a shift map $S$ on $\Omega=\mathbb{R}^{\N}$ equipped with the Gauss measure $\mu$, that is, $(S(\omega(s))_{s\in \R}=(\omega(s+1)_{s\in \R})$. The projection on $s$-th coordinate is denoted by $X(s),$ so $X((\omega(s))_{s\in \R})=\omega_s$.  We recall that $\mu$ is said to be a Gauss measure if
the joint distribution of any family of variables $(X_{s_1},\cdots,X_{s_r})$ is an
$r$-dimensional Gauss distribution, that is, for any Borel sets $B_1,\cdots,B_r$, we have
\begin{align}\label{Gauss-1}
	\mu(X(s_1)\in B_1,\cdots, X(s_r)\in B_r)=
	\frac{1}{(2\pi)^{\frac{r}{2}}.|C|}\int_{B_1\times \cdots \times B_r } \exp\big(-\frac{1}{2}.{}^{t}{x}.C^{-1}.x\big) dx,
\end{align}
where $x=(x_1,x_2,\cdots,x_r)$, $dx=dx_1\times \cdots dx_r$, $C$ is the matrix of covariance given by
$$C(s_i,s_j)=\E\big(X_{s_i}X_{s_j}\big)=\int X_{s_i}(\omega)X_{s_j}(\omega)d\mu(\omega)),\; \; i,j=1,\cdots,r.$$
It is known that the Gaussian distribution  is well defined by its matrix of covariance and its mean vectors $(m(s_1),\cdots,m(s_r)$ given by
  $$m(s)=\int X_s(\omega) d\mu(\omega).$$
But, since $\mu$ is invariant with respect to $S$, we have
$$m(s)=m(0) \; \; \textrm{and} \; \; C(s_1,s_2)= C(s_i+t,s_j+t), \;\; \forall t \in \N.$$
Whence, for any $s_1,s_2 \in \N$ 
$$C(s_1,s_2)=C(0,s_2-s_1).$$ 
Therefore, by Herglotz-Bochner-Khinchin theorem, there exist a finite measure on the circle $S^1$ such that 
$$C(s+t,s)=\widehat{\sigma}(t)=\int z^t d\sigma(z).$$
The measure $\sigma$ is known as the spectral measure of the Gauss measure $\mu$, and it is completely determines the Gauss measure $\mu$ (under the assumption that $m=0$). The study of ergodicity, weak-mixing, mixing, rigidity are reduced to the study of the property of $\sigma$. For a nice account on the spectral analysis of Gaussian transformation, we refer to \cite[Chap. 14]{CFS}.

Let us further notice that for a given $\sigma$, we generate a Gauss transformations with the desired properties. Hence, we are reduced to produce a probability measure on the circle for which the Fourier coefficients along a set of density zero converge to zero and  $\sigma$ is not in Rajchman class. We recall that $\sigma$ is in Rajchman class if its Fourier coefficients converge to zero. \\

Our strategy here is reduced to produce some tools from harmonic analysis which will allows us to produce such measures. For that, we recall the notion of Rajchman sets and its characterization.
\begin{defn}A Radon measure $\mu$ on the unit circle is said to be a Rajchman measure if 
$$\widehat{\mu}(n)\tend{|n|}{+\infty}0.$$	
Let $R$ be a subset of $\Z$. $R$ is said to be a Rajchman set if for any Radon measure $\mu$ on the unit circle, if $\widehat{\mu}(n) \tend{n, n \in \Z \setminus R}{+\infty}0$ then 
	$\widehat{\mu}(n)\tend{|n|}{+\infty}0,$ that is $\mu$ is Rajchman measure.
\end{defn} 
The characterization of Rajchman sets is due to B. Host and F. Parreau and its based on the notion of classical Riesz product which we recall briefly in the following paragraph.
\paragraph{\textbf{Classical Riesz products.}} The Riesz products are some kind of probability measures defined on circle. The original example was introduced by F. Riesz in his 1918's paper \cite{FR} to produce a continuous probability measure whose Fourier coefficients does not vanish at infinity.  They are powerful source of counterexamples and their definition are intimately related to the notion of the so-called dissociation. This later notion is defined as follows.\\

Consider the following two products:
\begin{eqnarray*}
	(1+z)(1+z) &=& 1+z+z +z^2 = 1 +2z +z^2,\\
	(1+z)(1+z^2) &=& 1+z+z^2+z^3.
\end{eqnarray*}
In the first case we group terms with the same power of $z$, while in the second case all the powers of $z$ in the formal expansion are distinct. In the second case we say that the polynomials $1 + z$ and $1 + z^2$ are dissociated. More generally we say that a set of trigonometric polynomials is dissociated if in the formal expansion of product of any finitely many of them, the powers of $z$ in the non-zero terms are all distinct \cite{Abd-Nad}.\\
Let $(n_j)_{j \geq 0}$ be a sequence of positives integers such that for all $j \geq 1,$ 
$$n_j>2\sum_{k=0}^{j-1}n_k,$$
and $(c_j)_{j \geq 0}$ a sequence of complex numbers such that $|c_j| \leq \frac12,$ for all $j$. Put
$$P_j(z)=1+2\Rea\big({c_j z^{n_j}}\big), z \in S^1, j=1,2,\cdots$$
Then it is well known that $(P_j))$ are dissociated. We further have 
$$\prod_{j=0}^{k}P_j(z)=\sum_{(\epsilon_l)_{l=0}^{k} \in\{-1,0,1\}^{k+1} }
\big(\prod_{m=0}^{k}\mathcal{C}^{\epsilon_m}(c_m)\big)z^{\sum_{j=0}^{k}\epsilon_j n_j},$$
Where 

$$\mathcal{C}^{\epsilon}(z)=\begin{cases}
z & \textrm{if}\; \epsilon=1,\\
1 & \textrm{if}\; \epsilon=0,\\
\overline{z} & \textrm{if}\; \epsilon=-1.
\end{cases}	
$$
We thus get 
$$\prod_{j=0}^{k}P_j(z)=\sum_{j=-N_k}^{N_k}d_jz^j, N_k=\sum_{j=0}^{k}n_j.,$$
with 
$$d_j=\begin{cases}
\prod_{m=0}^{k}\mathcal{C}^{\epsilon_m}(c_m) & \textrm{if}\; j=\sum_{i=0}^{k}\epsilon_i n_i ,\\
0 & \textrm{if~not.}
\end{cases}	
$$
For each $k \geq 0$, put $$\mu_k=\prod_{j=0}^{k}P_j(z)dz.$$
Then $\mu_k$ are all a probability measures, and it is well known that $(\mu_k)$ converge to $\mu$ a probability measure on the circle called nowadays classical Riesz product. Indeed, we have
$$\widehat{\mu}(j)=\begin{cases}
\prod_{m \in J}\mathcal{C}^{\epsilon_m}(c_m) & \textrm{if}\; j=\sum_{i \in J}\epsilon_i n_i ,
J \subset \N \textrm{~~finite.} \\
0 & \textrm{if~not.}
\end{cases}	
$$
We thus have that the support of $\widehat{\mu}$ is
$$\Big\{ \sum_{i \in J}\epsilon_i n_i ,
J \subset \mathcal{P}_f(\N),\epsilon_i \in \{-1,0,1\}, i \in J, \Big \}, $$
where $\mathcal{P}_f(\N)$ is the set of all finite set of $\N$. For details on the Riesz products and its generalization, we refer to \cite{Abd-Nad} and the references therein. In \cite{Abd-Nad}, it is noticed that the spectral type of the rank one maps give raise to a generalized Riesz products and the classical Riesz product products can be used to produced an infinite rank one, that is, a rank one map acting on infinite measure space. Therefore, as a consequence of the proof of Theorem \ref{v-staircase-thm}, we have

\begin{cor} Let $M \subset \Z$ be such that its complement is thick. Then, there is a continuous generalized Riesz product $\sigma$ such that $\sigma$ is not a Rajchman measure and 
\begin{align}
		\widehat{\sigma}(m)\tend{m \in M}{+\infty}0.
	\end{align}
\end{cor}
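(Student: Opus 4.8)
The plan is to take for $\sigma$ the (reduced) maximal spectral type of the transformation furnished by Theorem~\ref{v-staircase-thm}. Since the complement of $M$ is thick, that theorem provides a weak mixing $\frac{1}{2}$-rigid staircase transformation $T$ on a standard probability space $(X,\mathcal B,\mu)$ which is mixing on $M$. A staircase transformation is rank one, hence has simple spectrum, so the orthocomplement of the constants $L^2_0(\mu)=\{g\in L^2(\mu):\int g\,d\mu=0\}$ is a cyclic subspace; I would fix a cyclic vector $f$ for it and set $\sigma:=\sigma_f$, the spectral measure of $f$, so that $\widehat\sigma(n)=\langle U_T^n f,f\rangle$ for all $n$ (spectral theorem; here $U_T g=g\circ T$). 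As recalled in Section~\ref{Gaussian} and used in \cite{Abd-Nad}, the reduced maximal spectral type of a rank one transformation is a generalized Riesz product, so $\sigma$ is such a product. It then remains to check the three properties of $\sigma$: continuity, decay of $\widehat\sigma$ along $M$, and failure of the Rajchman property.

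Continuity is immediate, since a weak mixing transformation has no non-constant eigenfunctions and therefore $\sigma=\sigma_f$ has no atoms. For the decay, the fact that $T$ is mixing on $M$ means $\mu(T^m A\cap B)\to\mu(A)\mu(B)$ as $m\to\infty$, $m\in M$, for all measurable $A,B$; by bilinearity and an $L^2$-density argument this upgrades to $\langle U_T^m g,h\rangle\to 0$ as $m\to\infty$, $m\in M$, for all $g,h\in L^2_0(\mu)$, and taking $g=h=f$ gives $\widehat\sigma(m)=\langle U_T^m f,f\rangle\to 0$ as $m\to\infty$ in $M$. (If $M\subset\Z$ rather than $M\subset\N$, one reduces to the positive part using $\widehat\sigma(-n)=\overline{\widehat\sigma(n)}$.)

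For the Rajchman property I would use the classical fact that $T$ is mixing precisely when its maximal spectral type is a Rajchman measure, together with the observation that the $\frac{1}{2}$-rigid $T$ is not mixing. Explicitly: $\frac{1}{2}$-rigidity yields a sequence $r_k\to\infty$ with $\liminf_k\mu(T^{r_k}A\cap A)\ge\frac{1}{2}\mu(A)$ for every measurable $A$. Choosing $A$ with $0<\mu(A)<\frac{1}{2}$ and $g=\mathbbm{1}_A-\mu(A)\in L^2_0(\mu)$, one has $\widehat{\sigma_g}(r_k)=\langle U_T^{r_k}g,g\rangle=\mu(T^{r_k}A\cap A)-\mu(A)^2$, whence $\liminf_k\widehat{\sigma_g}(r_k)\ge\frac{1}{2}\mu(A)-\mu(A)^2>0$, so $\sigma_g$ is not Rajchman. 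Since $\sigma$ is the maximal spectral type, $\sigma_g\ll\sigma$; and the Rajchman measures form a band (if $\nu\ll\lambda$ and $\lambda$ is Rajchman then so is $\nu$, by approximating $d\nu/d\lambda$ in $L^1(\lambda)$ by trigonometric polynomials $p$, each $\widehat{p\lambda}(n)$ being a finite sum of shifted coefficients $\widehat\lambda(n-j)$). Taking contrapositives, $\sigma$ is not Rajchman, and the three properties are established.

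The only step that is more than routine bookkeeping is the appeal to the rank one / generalized Riesz product correspondence for the precise transformations built in the proof of Theorem~\ref{v-staircase-thm}: one must verify that interleaving the uniform Cesaro substages, the number approximation substages and the occasional ``cut in half, add no spacer'' move still produces a (finite) rank one transformation whose reduced maximal spectral type is a generalized Riesz product in the sense of Section~\ref{Gaussian}. Granting this, the rest is just the standard dictionary between mixing/rigidity of $T$ and Rajchman/non-Rajchman behaviour of its maximal spectral type, applied to the transformation of Theorem~\ref{v-staircase-thm}.
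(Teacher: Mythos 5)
Your argument is correct and is essentially the same as the paper's, which states the corollary as an immediate consequence of Theorem \ref{v-staircase-thm} together with the fact (cited from \cite{Abd-Nad}) that the reduced maximal spectral type of a rank one map is a generalized Riesz product; you simply make explicit the standard dictionary (weak mixing $\Rightarrow$ continuity, mixing on $M$ $\Rightarrow$ decay of $\widehat{\sigma}$ along $M$, $\tfrac{1}{2}$-rigidity plus the band property of Rajchman measures $\Rightarrow$ $\sigma$ not Rajchman). The one point you rightly flag as non-routine — that the specific interleaved construction of Theorem \ref{v-staircase-thm} is rank one with generalized Riesz product spectral type — is exactly the point the paper also leaves to the reference, so no gap beyond what the paper itself assumes.
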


We will now present a characterization of Rajchman sets based on the classical Riesz product. This is due to Host-Parreau \cite[Théorème 3]{HP1}, \cite[Corollary 2]{HP2}.

\begin{thm} A subset $R \subset \Z$ is a Rajchman set if and only if for any $t \in \Z$, for any dissociated sequence $(n_j)$ we have
\begin{align}\label{HP}
\Big(\Big\{\sum_{j \in \N}\epsilon_j n_j, \epsilon_j \in \{\pm 1,0\}, \sum_{j}|\epsilon_j|<+\infty \Big\}+t\Big)
\cap R^c \neq \emptyset.
\end{align} 
\end{thm}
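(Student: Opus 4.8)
The plan is to prove the two implications of the stated equivalence separately, both by contraposition, using the classical Riesz products recalled just above as the bridge between the arithmetic condition \eqref{HP} and the Rajchman property. For a dissociated sequence $\vec{n}=(n_j)$ write $E(\vec{n})=\big\{\sum_j\epsilon_j n_j:\epsilon_j\in\{\pm1,0\},\ \sum_j|\epsilon_j|<\infty\big\}$, so that \eqref{HP} asserts $(E(\vec{n})+t)\cap R^c\neq\emptyset$ for all $t\in\Z$ and all $\vec{n}$. First suppose \eqref{HP} fails, i.e. there are $t\in\Z$ and a dissociated $\vec{n}$ with $E(\vec{n})+t\subseteq R$. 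Take the Riesz product $\mu=\prod_j\big(1+2\Rea(c_jz^{n_j})\big)\,dz$ with $|c_j|=\tfrac12$: by the formula for $\widehat{\mu}$ recalled before the theorem, $\widehat{\mu}$ is supported on $E(\vec{n})$ with $|\widehat{\mu}(n_j)|=\tfrac12$ for every $j$, so, since $n_j\to\infty$, this probability measure is not Rajchman. Multiplying $\mu$ by an appropriate character gives $\nu$ with $\widehat{\nu}(n)=\widehat{\mu}(n-t)$; then $\widehat{\nu}$ is supported on $E(\vec{n})+t\subseteq R$, hence vanishes on $\Z\setminus R$ and a fortiori tends to $0$ there, while $|\widehat{\nu}(n_j+t)|=\tfrac12$ along $n_j+t\to\infty$. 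Thus $\nu$ witnesses that $R$ is not a Rajchman set, which is the contrapositive of ``$R$ Rajchman $\Rightarrow$ \eqref{HP}''.

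For the converse, assume $R$ is not Rajchman and fix a Radon measure $\mu$ with $\widehat{\mu}(n)\to0$ along $\Z\setminus R$ but $\limsup_{|n|\to\infty}|\widehat{\mu}(n)|=\delta>0$. Convolving $\mu$ with its conjugate reflection turns $\widehat{\mu}$ into $|\widehat{\mu}|^2$, so we may assume $\mu\geq0$; and replacing $\mu$ by its image under $z\mapsto\bar z$ (which replaces $R$ by $-R$ and preserves the Rajchman property, using $-E(\vec{n})=E(\vec{n})$) we may assume there is $m_k\to+\infty$ with $|\widehat{\mu}(m_k)|\geq\delta$, whence $m_k\in R$ for all large $k$. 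The goal is to extract from such frequencies a dissociated $\vec{n}$ and a shift $t$ with $E(\vec{n})+t\subseteq R$. I would build $\vec{n}$ by induction together with a sequence of measures converging weak-$\ast$ to a non-Rajchman witness $\lambda$: having chosen $n_1<\dots<n_{j-1}$, examine the translates $z^{-m}\mu$ along frequencies $m$ that are lacunary enough relative to $n_1,\dots,n_{j-1}$ to keep the eventual sequence dissociated ($n_j>2\sum_{i=1}^{j-1}n_i$) and for which $|\widehat{\mu}(m)|\geq\delta$ (so the weak-$\ast$ cluster points are non-trivial), choose the next frequency $n_j$ from among such $m$, and by a diagonal argument pass to a single limit $\lambda$ with $\widehat{\lambda}\not\to0$ whose Fourier support is, at infinity, contained in one translate $E(\vec{n})+t$. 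Since a large frequency carrying mass of $\lambda$ is a weak-$\ast$ limit of large frequencies carrying mass of $\mu$, and those lie in $R$, one gets $E(\vec{n})+t\subseteq R$, i.e. the failure of \eqref{HP}; this is the contrapositive of ``\eqref{HP} $\Rightarrow$ $R$ Rajchman''.

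The delicate part, and the step I expect to be the main obstacle, is the inductive construction in the converse: the $n_j$ must be lacunary enough to be dissociated, yet the weak-$\ast$ limits must be controlled tightly enough that the spectrum of $\lambda$ lands \emph{exactly} on one translate of $E(\vec{n})$ rather than merely eventually inside $R$, while simultaneously a uniform lower bound $\geq\delta$ on infinitely many Fourier coefficients has to survive the passage to the limit so that $\lambda$ remains non-Rajchman. Reconciling these three requirements is precisely the substance of the Host--Parreau analysis of Riesz products and of weak-$\ast$ cluster points of translated measures, and I would fill in the details following \cite{HP1,HP2}.
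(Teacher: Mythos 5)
First, note that the paper does not prove this statement: it is quoted as a known theorem of Host and Parreau, with references to \cite{HP1} and \cite{HP2}, so there is no in-paper proof to compare yours against. Your argument for the implication ``\eqref{HP} fails $\Rightarrow$ $R$ is not Rajchman'' is correct and is the standard one: the classical Riesz product built on the dissociated sequence $(n_j)$ with $|c_j|=\tfrac12$ has Fourier transform supported on $E(\vec n)$ with $|\widehat\mu(n_j)|=\tfrac12$, and multiplying by a character translates the spectrum into $E(\vec n)+t\subseteq R$, producing a measure whose Fourier coefficients vanish identically off $R$ yet do not tend to $0$.

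The converse direction, however, is not proved; it is only a plan, and the step you yourself flag as ``the main obstacle'' is precisely the entire content of the Host--Parreau theorem. Concretely: from a measure $\mu$ with $\widehat\mu\to0$ off $R$ and $\limsup|\widehat\mu|\geq\delta>0$, you must produce a dissociated sequence $(n_j)$ and a shift $t$ such that \emph{every} word $t+\sum_j\epsilon_j n_j$ lies in $R$. The only mechanism available for forcing a frequency into $R$ is a lower bound $|\widehat\mu|\geq c>0$ at that frequency combined with the decay of $\widehat\mu$ off $R$, so what is really needed is a uniform lower bound on $|\widehat\mu(t+\sum_j\epsilon_j n_j)|$ over \emph{all} finite words, including arbitrarily long ones; a greedy choice of lacunary frequencies with $|\widehat\mu(n_j)|\geq\delta$ gives no control whatsoever on $\widehat\mu$ at the sums and differences $\pm n_{j_1}\pm\cdots\pm n_{j_k}$. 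Your proposal to take weak-$\ast$ cluster points of the translates $z^{-m}\mu$ and hope that the limit's spectrum ``lands exactly on one translate of $E(\vec n)$'' is not justified and does not follow from a diagonal argument: weak-$\ast$ convergence of $z^{-m_k}\mu$ gives pointwise convergence of $\widehat\mu(\cdot+m_k)$ on bounded windows of frequencies, not on the unbounded set $E(\vec n)$, and nothing in the sketch prevents the limit measure from being Rajchman or from having spectrum far larger than a single translated word set. Closing this gap requires the quantitative structure theory of \cite{HP1} and \cite{HP2} (an almost-multiplicativity argument showing that one can pass to a subsequence along which $\widehat\mu$ behaves like the coefficient sequence of a Riesz product), and deferring it to the references means the hard half of the equivalence is not established here. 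As written, the proposal proves one implication and correctly identifies, but does not bridge, the gap in the other.
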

In other words, $R$ it does not contain a shift of the Fourier support of a classical Riesz product.
\noindent Let us emphasize that Host and Parreau stated their arithmetical characterization for the so-called set of type $M_0$.  The set $C$ is a set of type $M_0$ if for any finite measure $\mu$ such $\textrm{supp}(\widehat{\mu}) \subset C$ , we have $$\lim_{|n|\to \infty} |\widehat{\mu}(n)|=0.$$ 
As a consequence, they obtain a  characterization for the set continuity. We recall that  set $C$ is said to be a set of continuity if for each number $\epsilon>0$ there is a $\delta > 0$ such that, if $\mu$ is finite measure with $\|\mu\| \leq 1$, then the following condition holds
$$\limsup_{n \in C^c} |\widehat{\mu}(n)|<\delta \Longrightarrow \limsup_{n \in C} |\widehat{\mu}(n)|<\epsilon.$$
Precisely, Host and Parreau established that $C \subset \Z$ is a set of continuity if and only if 
there is $N \in \N$ such that for any $t \in \Z$, we have
\begin{align}\label{HPn}
	\Big(\Big\{\sum_{j \in \N}\epsilon_j n_j, \epsilon_j \in \{\pm 1,0\}, \sum_{j}|\epsilon_j| \leq N \Big\}+t\Big)
	\cap C^c \neq \emptyset.
\end{align}
A set $E$ which does not satisfy \eqref{HPn} are is said to staisfy \emph{Rajchman dissociated property}. For a full exposition on Host-Parreau arithmetical characterization of Rajchaman and continuity sets, we refer to \cite[Chap. VI]{HP3}.\\

\noindent{}We are now able to stat the following theorem.
\begin{thm}\label{R}Let $E \subset \Z$ be a set which satisfy the Rajchman dissociated property. Then, there exist a non-mixing Gaussian transformation that is mixing on the complement of $E$.
\end{thm}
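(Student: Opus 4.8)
\textbf{Proof plan for Theorem \ref{R}.}
The plan is to reduce the construction of the Gaussian transformation to the construction of a suitable spectral measure $\sigma$ on the circle, namely a continuous probability measure such that $\widehat{\sigma}$ fails to vanish at infinity (so the associated Gaussian transformation is not mixing) while $\widehat{\sigma}(n)\to 0$ along $\Z\setminus E$ (which forces the Gaussian transformation to be mixing on $E^c$). By the standard correspondence between symmetric measures on $S^1$ and Gaussian transformations recalled in Section \ref{Gaussian}, it suffices to produce such a $\sigma$, and one may symmetrize at the end so that $\widehat\sigma$ is real and even. First I would invoke the hypothesis that $E$ satisfies the Rajchman dissociated property: by the Host--Parreau characterization (the negation of \eqref{HPn}, equivalently the failure of $E^c$ to be a set of continuity), there is a shift $t\in\Z$ and a dissociated sequence $(n_j)$ such that the Fourier support
\[
F=\Big\{\sum_{j\in\N}\epsilon_j n_j:\ \epsilon_j\in\{\pm1,0\},\ \sum_j|\epsilon_j|<\infty\Big\}+t
\]
is contained in $E$. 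Replacing $(n_j)$ by a faster-growing subsequence if necessary, I may assume $n_{j+1}>2\sum_{i\le j}n_i$ and even that the gaps grow as quickly as I like.

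Next I would form a classical Riesz product adapted to $(n_j)$. Put $P_j(z)=1+2\,\Rea(c_j z^{n_j})$ with coefficients $c_j$ chosen so that $|c_j|\le\frac12$ but $c_j\not\to0$ — for instance $c_j=\frac12$ for all $j$, or $c_j=\frac12$ along a sparse subsequence of indices and $c_j=0$ otherwise. Let $\sigma=\prod_{j\ge0}P_j(z)\,dz$ be the weak-$*$ limit of the partial products; as recalled in the excerpt, $\sigma$ is a probability measure, it is continuous (the classical Wiener-type argument: the partial products have $L^2$ norm bounded below, so $\sigma$ has no atoms — or directly, $\sum|c_j|^2=\infty$ forces continuity by the standard dichotomy for Riesz products), and its Fourier support is exactly the set $\{\sum_{i\in J}\epsilon_i n_i\}\subset F-t$. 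Multiplying by $z^{t}$ (i.e. translating the measure) moves the spectrum into $F\subseteq E$. Crucially, $\widehat\sigma(n_j)=c_j$ does not tend to $0$, so $\sigma$ is not a Rajchman measure and the corresponding Gaussian transformation is not mixing. On the other hand, $\widehat\sigma$ is supported in $E$, so trivially $\widehat\sigma(m)\to0$ as $|m|\to\infty$ with $m\in E^c$ (it is identically zero there); hence the Gaussian transformation is mixing on $E^c$. Finally, to make $\sigma$ symmetric I would take $\tfrac12(\sigma+\tilde\sigma)$ where $\tilde\sigma$ is the image under $z\mapsto\bar z$; this keeps the support inside $E\cup(-E)$, and since the Rajchman dissociated property is easily seen to be symmetric (one can absorb the sign into the $\epsilon_j$ and the shift), one arranges $-E\subseteq E$ from the start or works with $E\cup(-E)$ throughout.

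The main obstacle, and the point that needs the most care, is the bookkeeping that simultaneously guarantees (a) continuity of $\sigma$ and (b) non-vanishing of $\widehat\sigma$ at infinity while keeping $\mathrm{supp}(\widehat\sigma)\subseteq E$. Continuity of a classical Riesz product is automatic once infinitely many coefficients are bounded away from $0$ (the $L^2$ norms $\|\mu_k\|_2^2=\prod(1+2|c_j|^2)$ diverge, so no $L^2$-normalized subsequence converges, forcing the limit measure to be non-atomic — or one quotes the classical result that $\prod(1+2\Rea(c_jz^{n_j}))\,dz$ is continuous iff $\sum|c_j|^2=\infty$). Non-vanishing is the reason we cannot let all $c_j\to0$, so one must be content with a sparse set of ``large'' coefficients; this is compatible with continuity precisely when that sparse set is still infinite, which we arrange. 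The support containment is handed to us by Host--Parreau with no further work. A secondary point is the reduction from ``generalized'' to ``classical'' Riesz product: here we only need the classical one, built on an honest dissociated integer sequence, and the spectral-measure-to-Gaussian correspondence is the same as in the corollary preceding the theorem, so no new machinery is required. Thus the proof is essentially: translate the Host--Parreau criterion into a dissociated sequence, build the Riesz product on it, and read off the two required properties of $\widehat\sigma$.
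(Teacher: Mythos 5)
Your overall route is the same as the paper's: reduce the theorem, via the Host--Parreau characterization, to producing a probability measure $\sigma$ on the circle with $\widehat{\sigma}(n)\to 0$ along $\Z\setminus E$ but $\widehat{\sigma}\not\to 0$ globally, and then take the stationary Gaussian process with covariance $\E(X_0X_n)=\widehat{\sigma}(n)$. The paper merely asserts the existence of such a $\sigma$ from the failure of the continuity property; you usefully make the witness explicit as a classical Riesz product on the dissociated sequence furnished by the negation of \eqref{HPn}. (A minor side remark: a classical Riesz product on a dissociated sequence is always continuous --- its Fourier transform lives on a set of density zero, so Wiener's lemma rules out atoms --- and the dichotomy $\sum|c_j|^2=\infty$ governs singularity versus absolute continuity, not continuity versus atoms; but continuity is not the issue here.)

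The genuine gap is at the translation step. The spectral measure of a real stationary Gaussian process must be a \emph{positive, symmetric} measure, so that $\E(X_0X_n)$ is a real positive-definite sequence; in particular $|\widehat{\sigma}(n)|$ is an even function of $n$. Multiplying the Riesz product density by $z^{t}$ shifts the Fourier support into $W+t\subseteq E$ (where $W=\{\sum_j\epsilon_jn_j\}$) but destroys positivity: $z^{t}\,d\sigma$ is a complex measure and is not the spectral measure of any Gaussian system. If instead you keep the honest positive Riesz product, its Fourier support is $W$, which need not lie in $E$ when $t\neq 0$. Symmetrizing does not repair this: since $|\widehat{\sigma}|$ is even for any admissible spectral measure, mixing on $E^c$ automatically forces $|\widehat{\sigma}(m)|\to 0$ on $E^c\cup(-E)^c=(E\cap(-E))^c$, so what you actually need is a translated word set inside $E\cap(-E)$, not inside $E\cup(-E)$; and $(W+t)\cap(W-t)$ can be empty for $t\neq 0$. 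For instance $E=W+1$ with $W$ built on $n_j=10^{j}$ satisfies the Rajchman dissociated property, yet $E\cap(-E)=\emptyset$, so no real Gaussian system can be non-mixing while mixing on $E^c$. Your argument is therefore complete exactly when the Host--Parreau witness can be taken with $t=0$ (in particular when $E$ is symmetric, which covers the application to the squares); for a genuinely non-symmetric $E$ you would need to produce a positive symmetric non-Rajchman measure whose Fourier transform concentrates on $E\cap(-E)$, and this is not delivered by ``multiplying by $z^t$.'' The paper's own two-line proof is silent on the same point, so you have not introduced the difficulty, but your write-up needs to confront it rather than absorb it into the modulation step.
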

\begin{proof}Let $E \subset \Z$ be a set which satisfy the Rajchman dissociated property. Then, $E$ is not a continuity set. Therefore, there is  $\sigma$ a probability measure on the circle such that 
	$$\widehat{\sigma}(n) \tend{n, n \in \Z \setminus E}{+\infty}0.$$  
Take a Gaussian process $(X_n)_{n \in \Z}$ such that 
$$\E(X_0 X_n)=\widehat{\sigma}(n) .$$
Then, the associated Gaussian maps satisfied the desired properties.
\end{proof}
As a corollary, we have 
\begin{cor} There exists a non-mixing Gaussian transformation that is mixing on the square.
\end{cor}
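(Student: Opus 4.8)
The plan is to read this off from Theorem \ref{R}. Set
\[
E \egdef \Z \setminus \{\, n^2 : n \in \N \,\},
\]
so that $E^c$ is exactly the set of squares. Then, the moment we know that $E$ satisfies the Rajchman dissociated property, Theorem \ref{R} hands us a non-mixing Gaussian transformation that is mixing on $E^c = \{\, n^2 : n\in\N\,\}$, which is precisely the assertion. So the whole proof reduces to one verification: the complement of the squares is \emph{not} a set of continuity.

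To carry out that verification I would exhibit a single dissociated sequence whose classical Riesz product has Fourier support contained in $E$. Indeed, if $(n_j)$ is dissociated and
\[
\Lambda \egdef \Big\{ \sum_{j} \epsilon_j n_j : \epsilon_j \in \{\pm1,0\},\ \sum_j |\epsilon_j| < \infty \Big\} \subset E ,
\]
then for every $N\in\N$ the truncated set $\{\sum_j \epsilon_j n_j : \sum_j|\epsilon_j|\le N\}$, translated by $t=0$, still lies in $E$ and hence is disjoint from $E^c$; this is exactly the negation of condition \eqref{HPn}, i.e.\ $E$ satisfies the Rajchman dissociated property. The concrete choice is the lacunary (hence dissociated) sequence $n_j = 5^{2j+1}$, for which the needed disjointness $\Lambda \cap \{\, n^2 : n \in \N\,\} = \emptyset$ is precisely what is recorded in the proof of the corollary following Theorem \ref{Nat} (the squares and the elements of $\Lambda$ are separated by an elementary congruence obstruction). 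Granting that, $\Lambda \subset E$ and we are done with this step.

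Putting the pieces together: with $E$ as above, Theorem \ref{R} yields a Gaussian transformation $T$ which is non-mixing and mixing on $E^c = \{\, n^2 : n\in\N\,\}$, i.e.\ a non-mixing Gaussian transformation mixing on the square. The only genuine content is the arithmetic fact $\Lambda \cap \{\, n^2\,\} = \emptyset$ (already established earlier), so strictly speaking there is no real obstacle beyond invoking Theorem \ref{R} and, behind it, the Host--Parreau characterization of sets of continuity. As an alternative that avoids naming the sequence, one may simply use that $\{\, n^2 : n\in\N\,\}$ has density zero and that the complement of any density-zero set contains a shift of such a support $\Lambda$, hence satisfies the Rajchman dissociated property; Theorem \ref{R} then applies verbatim.
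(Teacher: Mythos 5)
Your proposal is correct and follows essentially the same route as the paper: the corollary is just Theorem \ref{R} applied to $E=\Z\setminus\{n^2: n\in\N\}$, with the Rajchman dissociated property of $E$ supplied either by the explicit dissociated sequence $5^{2j+1}$ (as in the corollary after Theorem \ref{Nat}) or by the density-zero observation. One caveat: the congruence justification you import from that earlier corollary is shaky as written (squares satisfy $n^2\equiv 0$ or $1 \pmod 3$, not always $1$, and sums $\sum\epsilon_j 5^{2j+1}$ can realize every residue mod $3$), so your density-zero fallback is the version of the argument to rely on.
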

At this point, let us emphasize that the class of continuity sets contain the Rajchman sets, the Sidon sets, and the Riesz sets.

 
However, We warn the reader that the notion of Sidon set in harmonic is completely different than that in number theory. 
\begin{defn}A  subset of integers  $S$ is a Sidon set if for every $f \in C(\T)$ such that 
	$\widehat{f}(n)=0$, for all $n \in S^c$, we have $\sum_{n\in\Z}|\widehat{f}(n)|<+\infty.$  
\end{defn}
It is easy to see that by applying closed-graph theorem that $S$ is a Sidon set if and only if, there exists a constant $K>0$ such that 
$$\sum_{n\in\Z}|\widehat{f}(n)| \leq K \|f\|_{\infty},$$
for every $f \in C(\T)$ with  $\textrm{supp}(\widehat{f}) \subset S.$ According to Host-Parreau characterization, Sidon sets, Hadamard sets,  Riesz sets, weak Rajchman sets are all Rajchman sets.
\begin{rem} All the examples produced here are in the class of zero entropy. Let us notice that Bergleson and al. addressed the famous question of Rhoklin whether mixing implies mixing of all order.  We would like to point out that by Rhoklin-Sinai observation, any system is relatively $K$-system whith respect to its Pinsker factor, and it is well known that mixing implies mixing of all order for $K$-system. Therefore, Rhoklin question is reduced to the class of zero entropy. Now, by Ryzikhov's theorem \cite{R-mixing}, all mixing transformations with finite rank are mixing of all order. This reduced the question to the mixing transformation with infinite rank. 
\end{rem}

\section{Mixing group action on thin subset without mixing property}\label{Groupe}
In this section, we will present a generalization of the previous result to the countable abelian group action $G$ equipped with its discrete topology. We denote its dual group by $\widehat{G}$ which is a compact abelian group. Let $(X, \mathcal{B}, \mu)$ be a Borel probability space. For each $g \in G$, let $T_g: X \rightarrow X$ be a measure-preserving transformation. The family $(T_g)_{g \in G}$ can be seen as the group action of $G$ on $X$. To avoid a heavy notation, we will write  the map $T_g$  simply $g$ when no confusion can arise. The associated Koopman is given by $U_g(f)=f \circ T_g,$ $f \in L^2(X, \mathcal{B}, \mu)$. In our setting, $G$ is a topological group and the map $g\in G\longmapsto T_g$ is assumed to be continuous with respect to weak operator topology, that is, for any $f,h \in L^2(X,\mu)$, the map $g \in G \mapsto \langle U_gf,h \rangle$ is continuous map. Therefore, the dynamical system $(X,\mathcal{B}, \mu,G)$ give a unitary representation of $G$ in $L^2(\mathcal{B}, \mu)$.    We recall that the action of $G$ is mixing on a subset $M$ of $G$, if for any sequence
$\{g_n\} \subset M$, for any Borel set $A,B$, we have
$$\lim_{n\to \infty} \mu(g_nA \cap B)=\mu(A)\mu(B).$$
It is weak mixing if the only measurable solutions to the following equation 
$$f \circ T_g = \chi(g)f ~~~\textrm{a.e.},$$
where $\chi \in  \widehat{G}$  are the constant functions. \\

\noindent We start by stating the group version of Theorem \ref{v-staircase-thm}.
\begin{thm}\label{vs-group-action-1}
Let $M$ be a subset of $G$. Then, there is a non-mixing map on it which is mixing on $M$ if and only if its complement is a thick set.
\end{thm}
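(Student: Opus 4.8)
The proof splits into the two implications, in direct parallel with Theorem \ref{v-staircase-thm}. Recall that for a countable abelian group $G$ a set $K\subset G$ is \emph{thick} if it contains a translate of every finite subset of $G$, equivalently if $G\setminus K$ is not syndetic.

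\textbf{The ``only if'' direction} I would obtain by contraposition, generalizing Proposition \ref{prop-2}. If $G\setminus M$ is not thick then $M$ is syndetic, so there is a finite $F\subset G$ with $\bigcup_{f\in F}(M+f)=G$. Let $(T_g)_{g\in G}$ be mixing on $M$. For each $f\in F$ and all measurable $A,B$, using commutativity, $\mu(T_{g+f}A\cap B)=\mu(T_g(T_fA)\cap B)\to\mu(T_fA)\mu(B)=\mu(A)\mu(B)$ as $g\to\infty$ in $M$, so each $M+f$ is a mixing set; and a finite union of mixing sets is a mixing set (if some $g_n\to\infty$ had $\mu(T_{g_n}A\cap B)$ bounded away from $\mu(A)\mu(B)$, a subsequence would lie in a single $M+f$, a contradiction). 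Hence $G$ itself is a mixing set, i.e.\ $(T_g)$ is mixing, and no non-mixing $G$-action can be mixing on $M$.

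\textbf{The ``if'' direction} is the substantive part, and the plan is to transport the construction of Subsections \ref{stair-construct-section}--\ref{mix-trans-rec-section} to $G$. Fix a F\o lner sequence $(F_n)$ of $G$ and build a rank-one $G$-action by cutting and stacking: a column of height $h_n$ becomes a Rokhlin tower modelled on $F_n$, the cut parameter $r_n$ becomes the number of $F_n$-translates tiling the next F\o lner set, and ``adding $j$ spacers'' becomes a spacer rule compatible with the group operation. Since $G\setminus M$ is thick there are $t_\ell\in G$ and finite sets $E_\ell\uparrow G$ with $t_\ell+E_\ell\subset G\setminus M$; these play the role of the intervals $k_n+[-\ell_n,\ell_n]\subset M^c$ of Subsection \ref{Diagram}. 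Following the flow diagram (Figure \ref{fig:flow-diagram}) I would alternate (i) a uniform--Ces\`aro substage as in \S\ref{UC-substage} (the $G$-analogue of \cite{AF}), forcing asymptotic independence along ``most'' return times, with (ii) a number--approximation substage as in \S\ref{NA-substage}, using a group version of Lemma \ref{v-s-lem1} to make certain tower ``heights'' approximate elements $t_\ell+e\in G\setminus M$ to relative error $<\epsilon_\ell$; and periodically interrupt with a partially rigid move --- cut the current tower into two congruent subtowers and stack one on the other without spacer --- planted at a ``height'' inside $G\setminus M$, exactly as in \S\ref{mix-trans-rec-section}. The same summability argument as there (the proportion of measure added at step $i$ is $O(\epsilon_{p_i})$ and $p_i\uparrow\infty$) keeps the action finite measure preserving. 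To conclude, I would run the verification from the Proof of Theorem \ref{v-staircase-thm}: the partially rigid moves give $g_i\to\infty$ in $G\setminus M$ with $\liminf_i\mu(T_{g_i}A\cap A)\ge\frac{1}{2}\mu(A)$, so $(T_g)$ is $\frac{1}{2}$-rigid and not mixing; for $g\in M$ one partitions $G=M_1\cup(G\setminus M_1)$ where $M_1$ is the union over $i$ of the two long ``F\o lner intervals'' flanking each rigid height (the analogue of $[\epsilon_i\mathcal{H}_{n_i},\mathcal{H}_{n_i}-L_i]\cup[\mathcal{H}_{n_i}+L_i,\mathcal{H}_{n_i+1}]$). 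The part of $M$ outside $M_1$ mixes by the uniform--Ces\`aro property, while on $M_1$ the three-piece decomposition of $A$ bounds $\mu(T_gA\cap B)$ by a sum of terms of the form $\mu(S_i^{m}A\cap B)\to\mu(A)\mu(B)$, giving $\limsup_{g\in M_1}\mu(T_gA\cap B)\le 3\mu(A)\mu(B)$; since $M_1$ consists of translates of arbitrarily large F\o lner sets, the group version of the Ornstein-type Lemma \ref{GO} (with $(T_g)$ totally ergodic, which one builds in, and with $\{\chi(g):g\in M_1\}$ dense in $\T$ for every infinite-order $\chi\in\widehat G$, arranged by the choice of F\o lner sequence) upgrades this to genuine mixing on $M_1\supset M\cap M_1$.

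\textbf{The main obstacle} is step (ii)--(iii): carrying over the height-versus-cut-parameter combinatorics of Lemma \ref{v-s-lem1}, and the mechanisms ``$r_n=2$ forces $\frac{1}{2}$-rigidity'' and ``$\lim r_n^2/h_n=0$ forces mixing'' (\cite{Adams}), to an arbitrary countable abelian $G$, where ordered column heights must be replaced by a nested family of F\o lner sets and the spacer rule must respect addition in $G$. The cleanest way to sidestep the staircase-specific mixing estimate is to do only enough cutting and stacking to secure $\limsup_{g\in M}\mu(T_gA\cap B)\le K\mu(A)\mu(B)$ together with total ergodicity, and then invoke the group analogue of Lemma \ref{GO}; one then still has to verify that lemma's density hypothesis along $M$, which again reduces to planting the $(G\setminus M)$-approximating ``heights'' densely enough along the F\o lner exhaustion.
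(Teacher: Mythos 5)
Your ``only if'' direction is correct and is exactly the group version of Proposition \ref{prop-2}: if $M$ is syndetic then finitely many translates of $M$ cover $G$, mixing along each translate follows from mixing along $M$, and a finite union of mixing sets is mixing, so no non-mixing action can be mixing on such an $M$. This matches what the paper intends for that implication.

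The ``if'' direction is where your proposal diverges from the paper and where it has a genuine gap. The paper does \emph{not} transport the staircase construction to $G$; it proves Theorem \ref{vs-mixing-group} by a harmonic-analytic shortcut. Since $M^c$ is thick, it contains translates of word sets over a dissociated subset of $G$, so by the Host--Parreau characterization (Lemma \ref{HPg}) $M$ is not a set of continuity; equivalently there is a Riesz product $\sigma$ on $\widehat{G}$ whose Fourier transform is supported in $M^c$ and does not vanish at infinity. The stationary Gaussian $G$-action with spectral measure $\sigma$ then has its mixing behaviour along subsets of $G$ read off from $\widehat{\sigma}$: it is mixing on $M$ (where $\widehat{\sigma}$ vanishes) but not mixing globally (since $\widehat{\sigma}\not\to 0$). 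That is a complete proof in a few lines, whereas your route is a programme rather than a proof. Concretely, the steps you yourself flag as ``the main obstacle'' are genuinely unproved: for a general countable abelian group, F\o lner sets need not tile one another, so ``cut into $r_n$ subcolumns and stack left to right'' has no meaning without the del Junco $C$--$F$ machinery (which the paper only mentions in a closing remark and does not carry out); the group analogues of Lemma \ref{v-s-lem1}, of the uniform Ces\`aro property of \cite{AF}, and of the staircase mixing criterion of \cite{Adams} are asserted, not established; and the density hypothesis you would need for a group version of Lemma \ref{GO} (density of $\chi(M_1)$ for every infinite-order character $\chi$) is simply posited. None of these can be waved through, and the paper's Gaussian argument exists precisely to avoid them; if you want a self-contained proof, follow Section \ref{Gaussian} and build the Riesz product on $\widehat{G}$ with Fourier support in $M^c$, then take the associated Gaussian action.
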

We recall that the subset $R \subset G$ is called a thick set, if for every compact set $K \subset G$, we have $\ds \bigcap_{k\in K}k^{-1}R \neq \emptyset$.  A subset $S$ is syndetic if there is a compact set 
$K \subset G$ with $G=KS.$ For the correspond notions and a nice account, we refer to \cite{BHM} and \cite{HS}. 
Let us notice also that Theorem \ref{vs-group-action-1} can be obtained as a consequence of Host-Parreau characterization of a set of continuity. Indeed, their theorem was stated as follows
\begin{lem}[\cite{HP2}]\label{HPg} A set $E$ is a set of continuity if and only if for some positive integer $n$, $E$ does not contain the translate of a set of words on length $n$, that is,  $ \alpha \Big\{\ds\prod_{j=1}^{+\infty}\theta^{\epsilon_j}, \epsilon_j \in \{\pm 1,0\}, \sum_{j=1}^{+\infty}|\epsilon_j| \leq n\Big\}$, for any $\alpha \in G$ and infinite dissociate set $\{\theta_j\}$.   
\end{lem}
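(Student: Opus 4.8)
The plan is to follow the original argument of Host and Parreau \cite{HP2}, carried out on the compact abelian group $\widehat{G}$ on which the relevant Radon measures live (the characters $\theta_j$ and the elements of $E$ being viewed in $G \cong \widehat{\widehat{G}}$); the statement has the familiar two-implication shape of a set-of-continuity characterization. For the easy direction, suppose that for \emph{every} $n \in \N$ the set $E$ contains a translate $\alpha_n W_n$ of the length-$n$ word set $W_n = \{\prod_j \theta_j^{\epsilon_j} : \epsilon_j \in \{\pm 1,0\},\ \sum_j |\epsilon_j| \le n\}$ built from an infinite dissociated family $\{\theta_j\} \subset G$. For each $n$ I would manufacture a generalized Riesz product on $\widehat{G}$, namely the weak-$*$ limit $\mu_n = \lim_k \prod_{j \le k}\big(1 + \Rea(c_j^{(n)}\theta_j)\big)$, then translated so as to land inside $E$, and use dissociation exactly as in the classical Riesz-product paragraph above to read off its Fourier coefficients explicitly: $\widehat{\mu_n}$ is carried by the full word set, equals $c_j^{(n)}$ at $\theta_j$, and, on taking $|c_j^{(n)}|$ to decay fast enough in $j$, is uniformly small outside $W_n$. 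Since $\|\mu_n\| = 1$, the family $(\mu_n)$ has $\sup_{\chi \notin E}|\widehat{\mu_n}(\chi)|$ as small as we please while $\limsup_{\chi \in E}|\widehat{\mu_n}(\chi)| \ge \liminf_j |c_j^{(n)}| > 0$, which is precisely a violation of the $\delta$--$\epsilon$ continuity condition; hence $E$ is not a set of continuity.

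For the converse, assume $E$ is not a set of continuity and fix $\epsilon > 0$ together with measures $\mu_k$, $\|\mu_k\| \le 1$, such that $\limsup_{\chi \notin E}|\widehat{\mu_k}(\chi)| \to 0$ while $\limsup_{\chi \in E}|\widehat{\mu_k}(\chi)| \ge \epsilon$. Passing to weak-$*$ cluster points and running the Host--Parreau extraction, one shows that for each $n$ the ``large-coefficient'' set of a suitable cluster measure must, after a translation, contain an entire length-$n$ word set of some dissociated family in $G$; otherwise one could force the Fourier transform to decay along $E$ by a norm approximation, contradicting the uniform lower bound. This is the delicate half, and I expect it to be the main obstacle: extracting an \emph{honest} dissociated family and an honest word-length bound from a merely quantitative continuity defect is exactly the core of Host--Parreau, resting on the arithmetic of Riesz products (support duality, independence / quasi-independence of characters). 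The extra difficulty over the $\Z$-case is that $\widehat{G}$ carries no ambient order and $G$ may have torsion, so lacunary integer sequences must be replaced throughout by genuinely independent families of characters, and one must check that the Riesz-product bookkeeping is insensitive to this.

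Granting the characterization, Theorem \ref{vs-group-action-1} follows by matching the two dictionaries exactly as in Section \ref{Gaussian}: ``$M^c$ is thick in $G$'' is equivalent to ``$M$ is not syndetic'', which in turn corresponds to ``$M$ has the Rajchman dissociated property'', i.e.\ $M$ fails to be a set of continuity; Lemma \ref{HPg} then supplies a continuous non-Rajchman spectral measure $\sigma$ on $\widehat{G}$ with $\widehat{\sigma}(g) \to 0$ as $g \to \infty$ in $M$, and the associated Gaussian $G$-action (or the cutting-and-stacking constructions of Sections \ref{vs-staircase-alpha}--\ref{Nat-C} adapted to $G$) is non-mixing yet mixing on $M$. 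The reverse implication is the group analogue of Proposition \ref{prop-2}: mixing on a syndetic set forces strong mixing.
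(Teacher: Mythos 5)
The paper offers no proof of this lemma: it is imported verbatim as a known result of Host and Parreau (\cite{HP2}, see also \cite{HP1} and \cite[Chap.~VI]{HP3}), so there is no internal argument to compare yours against. Judged on its own terms, your outline of the easy direction is the standard one and is essentially correct: if for every $n$ the set $E$ contains a translate $\alpha_n W_n$ of a length-$n$ word set of an infinite dissociated family, the Riesz products $\prod_j\bigl(1+\Rea(c_j\theta_j)\bigr)$, multiplied by the character $\alpha_n$, witness the failure of the continuity condition. One correction, though: you say you will take $|c_j^{(n)}|$ ``to decay fast enough in $j$'' and simultaneously invoke $\liminf_j|c_j^{(n)}|>0$; these are incompatible. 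The smallness of $\widehat{\mu_n}$ off $\alpha_n W_n$ does not come from decay in $j$ but from word length: with $|c_j|=\tfrac12$ fixed, dissociation gives $|\widehat{\mu_n}|\le 2^{-(n+1)}$ at every word of length exceeding $n$ (and $0$ off the word set), while $|\widehat{\mu_n}(\alpha_n\theta_j)|=\tfrac12$ on points of $E$, which is exactly the violation you want.

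The genuine gap is the converse, and you name it yourself: extracting, from a mere quantitative failure of continuity, an honest infinite dissociated family $\{\theta_j\}$ and a translate whose full length-$n$ word set sits inside $E$ is the substance of Host--Parreau's theorem, and ``running the Host--Parreau extraction'' is not an argument but a pointer to one. Since the paper itself only cites the result, deferring this half to \cite{HP2} is a legitimate way to use the lemma, but your text should then be presented as a reduction to the literature rather than as a proof. Separately, your closing paragraph asserts that thickness of $M^c$ is \emph{equivalent} to $M$ having the Rajchman dissociated property; that combinatorial bridge (every thick set contains a translated word set of an infinite dissociated family, and conversely) is itself nontrivial, is not part of Lemma \ref{HPg}, and is nowhere established in the proposal, so the derivation of Theorem \ref{vs-group-action-1} as written still has a hole at that step.
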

We recall  that the subset $\{\theta_j\}$ is dissociate if for each integer $N$, 
$$  \prod_{j=1}^{N}\theta^{\epsilon_j}=e \textrm{~~with~~} \epsilon_j=0,\pm 1, \pm 2
\Longrightarrow \theta^{\epsilon_j}=1, \forall j=1,\cdots,N.$$
We have also the following 
\begin{thm}\label{vs-mixing-group} Let $M \subset G$ be a subset such that its complement is a thick set. Then, there is a non-mixing group action $(T_g)_{g \in G}$ which is mixing on $M$. 
\end{thm}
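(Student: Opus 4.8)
The plan is to reduce Theorem \ref{vs-mixing-group} to the earlier staircase construction of Theorems \ref{v-staircase-thm} and \ref{v-r-staircase-thm} by replacing cutting-and-stacking of intervals in $[0,1]$ with the dual-group analogue of a rank-one $G$-action built over $\widehat{G}$. Since $G$ is a countable discrete abelian group, its dual $\widehat{G}$ is a compact metrizable abelian group carrying normalized Haar measure, and on it we have a natural odometer-type $G$-action coming from a nested sequence of finite-index subgroups (a F{\o}lner–exhaustion $G = \bigcup_n F_n$). First I would set up the combinatorial skeleton: since $M^c$ is thick in $G$, for every finite (equivalently compact) $K \subset G$ there is $k \in G$ with $Kk \subset M^c$; from this I would extract, along an exhausting sequence of finite symmetric sets, group elements $k_n$ and "thickness radii" $\ell_n$ playing the role of the integer sequences $k_n,\ell_n$ in Subsection \ref{Diagram}, so that a large ball around each $k_n$ avoids $M$.

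The second step is to transplant the staircase/tower machinery. I would build the $G$-action by an inductive cutting-and-stacking over $\widehat{G}$: at each stage one has a finite tower (a partition of a piece of $\widehat{G}$ into levels indexed by a F{\o}lner set $F_n$), one performs the "uniform Cesaro substage" and the "number approximation substage" exactly as in Subsections \ref{UC-substage}–\ref{NA-substage} to force $G$-mixing of the intermediate actions $S_i$ (using that in the abelian-group setting uniform Cesaro averaging plus growing cut parameters still yields mixing — this is the analogue of \cite{Adams}, \cite{AF}), and then at carefully chosen stages one inserts a partial-rigidity event by cutting a tall column and restacking without spacers so that a group element close to some $k_{y_i} \in M^c$ becomes a partial-rigidity time. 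The heights $\mathcal{H}_{n_i}$ are replaced by F{\o}lner sets whose "diameters" approximate $k_{y_i}$ in the sense dictated by the subgroup filtration; Lemma \ref{v-s-lem1} is used verbatim on the integer cut parameters, since the tower heights are still governed by the same recursion $h_{n+1} = r_n h_n + r_n(r_n-1)/2$. The verification that the resulting $T$ is mixing on $M$ then splits, as in the proof of Theorem \ref{v-staircase-thm}, into the $M_2$ part (handled by the uniform Cesaro condition \eqref{UC-cond}) and the $M_1$ part (handled by the mixing of each $S_i$ together with a decomposition of a level into a bottom piece, a left-top piece and a right-top piece), and one invokes Lemma \ref{GO} — or rather a direct argument, since here we have genuine mixing of the $S_i$ rather than only a $\limsup$ bound — on the residual sequence.

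The converse direction is the easy one and mirrors Proposition \ref{prop-2}: if $M^c$ is not thick then $M$ is syndetic, i.e.\ $G = KM$ for some finite $K$, and then mixing along $M$ forces mixing along $gM$ for each $g \in K$ by the identity $\mu(g h A \cap B) = \mu(hA \cap g^{-1}B)$, whence $T$ is mixing along $G = \bigcup_{g\in K} gM$, i.e.\ genuinely mixing. Alternatively, as the text already notes, this direction is immediate from Lemma \ref{HPg} (Host–Parreau): a non-thick $M^c$ makes $M^c$ a set of continuity, and spectrally a non-mixing transformation mixing on $M$ would produce a measure violating the continuity-set property.

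The main obstacle I anticipate is the transplantation in the second step: one must make the notions of "height of a column", "adding $k$ spacers", and "cut in half and restack" precise in the $\widehat{G}$ setting so that the arithmetic of the cut parameters still controls which \emph{group elements} act as near-rigidity times. Concretely, the delicate point is arranging that the F{\o}lner sets produced by the construction have the property that a prescribed group element $k_{y_i} \in M^c$ (and a whole $\ell_{y_i}$-ball around it, to absorb the error term $(\tfrac{1}{r_{j_{p_i}}} + \tfrac{1}{\mathcal{H}_{j_{p_i}}})\mathcal{H}_{n_i}$ from Lemma \ref{v-s-lem1}) lies outside $M$; this requires choosing the subgroup filtration of $G$ adaptively so that "integer heights" and "group translations" are compatibly interleaved, and it is here that the thickness hypothesis on $M^c$ is genuinely used. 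Once that bookkeeping is set up, the mixing verification and the finite-measure (summability of added mass) estimate go through exactly as in Subsection \ref{mix-trans-rec-section} and the proof of Theorem \ref{v-staircase-thm}.
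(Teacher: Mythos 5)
Your proposal goes down a genuinely different road from the paper, and the road has a hole in the middle of it. The paper does not transplant the staircase construction to $G$ at all: its entire proof of Theorem \ref{vs-mixing-group} is the harmonic-analytic one. Since $M^c$ is thick, one can recursively choose a dissociated sequence $(\theta_j)$ in $G$ so that (a translate of) the whole word set $\bigl\{\prod_j \theta_j^{\epsilon_j} : \epsilon_j \in \{0,\pm 1\},\ \sum_j |\epsilon_j| < \infty\bigr\}$ lies inside $M^c$ (thickness supplies, at each step, a translate of an arbitrarily large finite set sitting in $M^c$, which is exactly what is needed to place $\theta_{j+1}$ together with all its combinations with the previously chosen $\theta_1,\dots,\theta_j$). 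By the Host--Parreau criterion (Lemma \ref{HPg}) this means $M^c$ is not a set of continuity; concretely, the classical Riesz product $\sigma$ built on $(\theta_j)$ is a probability measure on $\widehat{G}$ whose Fourier transform is supported in $M^c$ and does not vanish at infinity. The stationary Gaussian $G$-action with spectral measure $\sigma$ is then mixing along $M$ (because $\widehat{\sigma}$, and hence every $\widehat{\sigma^{*n}}$ governing the $n$-th chaos, vanishes on $M$) but is not mixing (because $\sigma$ is not Rajchman). The observation you relegate to an "alternative" remark for the converse is in fact the whole proof of the forward direction.

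The gap in your own route is precisely the point you flag as "the main obstacle" and then wave away: the staircase arithmetic does not transfer "verbatim" to a general countable abelian group. Lemma \ref{v-s-lem1} and the recursion $h_{n+1}=r_nh_n+r_n(r_n-1)/2$ live in $\N$ and exploit the linear order and addition of integers; they let you steer an integer column height onto a prescribed integer $k_{y_i}$. For a general $G$ (say $\bigoplus_n \Z/2\Z$, or a group with torsion, or $\Q$) a rank-one action built by the del Junco $C$-$F$ / F{\o}lner-tower method has towers indexed by finite subsets of $G$, not by an integer height, and there is no canonical "height" that is simultaneously an element of $G$; consequently there is no analogue of "adding $k-\mathcal{H}_{n_i}$ spacers" that makes a prescribed group element $k_{y_i}\in M^c$ into a partial-rigidity time, and no analogue of the staircase spacer pattern that yields the uniform Cesaro estimate. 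Setting this up is not bookkeeping; it is the entire content of the theorem in the group setting, and your proposal does not supply it. (The paper itself only remarks, without proof, that an Ornstein-random/$C$-$F$ construction "can be obtained"; what it actually proves the theorem with is the Riesz-product Gaussian action above.) Your converse paragraph is correct but is not part of the statement being proved here; it belongs to Theorem \ref{vs-group-action-1}.
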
 
To produce an example of group action on $M$, we will follows the same ideas as in section \ref{Gaussian}, thus, we will produce a Gaussian action of $G$. Let $\Omega=\R^G.$ The coordinate maps $X_g(\omega)=\omega_g$ are said to Gaussian, if for any finite set $F \subset G$,
$(X_g)_{g \in G}$ is a Gaussian vectors. The action is stationary if, for any $g \in G$, the map $g \mapsto X_{gh}$ does not change the combined expectation of $X_g$.  As, for the $\Z$-action, the action is determined by the 
covariance, that is, $\E(X_g \overline{X_h})$ which form a positive define matrix. We assume also that the mean is zero. Therefore, the action is completely determined by 
$\E(X_e \overline{X_g})$ which must be positive define function on $G$. But, by Bochner's theorem, any positive define function of a locally compact group abelian group  generate a positive measure $\sigma$ on $\widehat{G}$ such that 
$$\E(X_e \overline{X_g})=\widehat{\sigma}(g).$$
Conversely, every  a positive measure $\sigma$ on $\widehat{G}$ correspond to a positive define function on $G$ and hence define a stationary Gaussian process. It is well known that the spectrum of the dynamical system $(\Omega, \mu, (T_g)_{g \in G})$ can be expressed in term of the measure $\sigma$ and It\^{o}'s Chaos as for $\Z$-actions ( see \cite[p.356-373]{CFS} \cite[p. 67-68]{Ki}). Now, we present the proof of Theorem \ref{vs-mixing-group}.
\begin{proof}[\textbf{Proof of Theorem \ref{vs-mixing-group}}.] Since the complement of $M$ is thick, it is follows by Lemma \ref{HPg} that it is not a set of continuity, that is, there is a probability measure which is a Riesz product such that the support of its Fourier transform is $M^c$. Now, by taking the corresponding Gaussian group action, we obtain a non-mixing action which is mixing on $M$. 
\end{proof} 
\begin{rem}The similar construction to that of section \ref{vs-staircase-alpha} can obtained by using the Ornstein random construction. It can be also extended to the case of countable group action by applying the $C-F$ algorithm introduced by A. del Junco in \cite{Ad} (this generalized the notion of funny rank one to the action of group). The $C-F$ mixing action à l'Ornstein are constructed in \cite{Ad} and \cite{M}.  
\end{rem}

Let us notice that our methods as some limitation, we thus ask:
\begin{question}
	It is possible to extend our results to the action of amenable countable groups or, by applying the same methods, to the locally compact abelian groups.
\end{question}

The extension to the locally compact abelian groups seems more accessible since the extension of the notions of Riesz products, Sidon Sets, Riesz sets, Rachjman and continuity sets are available (see for instance \cite{KS}, \cite{MH}, \cite[Appendix A, Section A.1]{GH} and the references therein).

\section*{\textbf{Acknowledgement.}}
	The first author wishes to express his thanks to Jean-Paul Thouvenot and Mahesh Nerurkar for a discussion on the subject. He would like to express also his thanks to the organizers of the workshop ``Operator theoretic aspects of ergodic theory" in live Wuppertal \& per zoom (17-19 February 2022) where a part of this work was announced. He wishes also to extend  his special thanks to Rainer Nagel. The authors would like to thank Valery Ryzhikov and Vitaly Bergelson for their valuable comments and suggestions.      
\appendix 

\section{Case where $m_i \in M_1$}

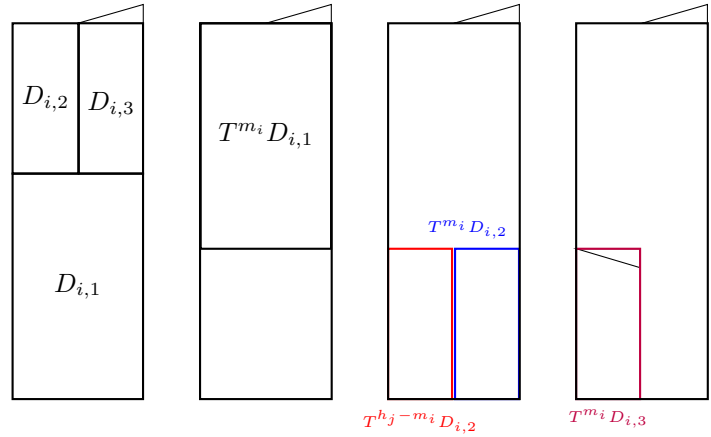
\begin{figure}[!htb]
\begin{center}
\begin{tikzpicture}

\draw (1.25,5) node[anchor=north]{}
  -- (2.12,5) node[anchor=north]{}
  -- (2.12,5.25) node[anchor=south]{}
  -- cycle;

\node (rect) at (.82,4)[draw, thick, minimum width=0.5cm, minimum height=2cm]{ 
\begin{minipage}{0.64cm}
\parbox{0.64cm}{\begin{center}$D_{i,2}$\end{center}}
\end{minipage}
}; 

\node (rect) at (1.69,4)[draw, thick, minimum width=0.5cm, minimum height=2cm]{ 
\begin{minipage}{0.62cm}
\parbox{0.62cm}{\begin{center}$D_{i,3}$\end{center}}
\end{minipage}
}; 

\node (rect) at (1.25,1.5)[draw, thick, minimum width=1.5cm, minimum height=3cm]{ 
\begin{minipage}{1.5cm}
\parbox{1.5cm}{\begin{center}$D_{i,1}$\end{center}}
\end{minipage}
};

\draw (3.75,5) node[anchor=north]{}
  -- (4.62,5) node[anchor=north]{}
  -- (4.62,5.25) node[anchor=south]{}
  -- cycle;

\node (rect) at (3.75,2.5)[draw, thick, minimum width=1.75cm, minimum height=5cm]{};

\node (rect) at (3.75,3.5)[draw, thick, minimum width=1.5cm, minimum height=3cm]{ 
\begin{minipage}{1.5cm}
\parbox{1.5cm}{\begin{center}$T^{m_i}D_{i,1}$\end{center}}
\end{minipage}
}; 

\draw (6.25,5) node[anchor=north]{}
  -- (7.12,5) node[anchor=north]{}
  -- (7.12,5.25) node[anchor=south]{}
  -- cycle;

\node (rect) at (5.80,1)[draw=red, thick, minimum width=0.85cm, minimum height=2cm, label=below:\textcolor{red}{\tiny $T^{h_j-m_i}D_{i,2}$}]{}; 

\node (rect) at (6.69,1)[draw=blue, thick, minimum width=0.85cm, minimum height=2cm, label=above:\textcolor{blue}{\tiny $T^{m_i}D_{i,2}\qquad$}]{}; 

\node (rect) at (6.25,2.5)[draw, thick, minimum width=1.75cm, minimum height=5cm]{}; 

\draw (8.75,5) node[anchor=north]{}
  -- (9.62,5) node[anchor=north]{}
  -- (9.62,5.25) node[anchor=south]{}
  -- cycle;

\draw (7.88,2) node[anchor=north]{}
  -- (8.72,2) node[anchor=north]{}
  -- (8.72,1.75) node[anchor=south]{}
  -- cycle;

\node (rect) at (8.30,1)[draw=purple, thick, minimum width=0.85cm, minimum height=2cm, label=below:\textcolor{purple}{\tiny $T^{m_i}D_{i,3}$}]{}; 

\node (rect) at (8.75,2.5)[draw, thick, minimum width=1.75cm, minimum height=5cm]{}; 

\end{tikzpicture}
\end{center}
\caption{Half-Rigid Staircase: $m_i \leq \mathcal{H}_{n_i}-L_i$}
\label{fig:verify-mix}
\end{figure}

\begin{figure}[!htb]
\begin{center}
\begin{tikzpicture}

\draw (1.25,5) node[anchor=north]{}
  -- (2.12,5) node[anchor=north]{}
  -- (2.12,5.25) node[anchor=south]{}
  -- cycle;

\node (rect) at (.82,1)[draw, thick, minimum width=0.5cm, minimum height=2cm]{ 
\begin{minipage}{0.64cm}
\parbox{0.64cm}{\begin{center}$D_{i,2}$\end{center}}
\end{minipage}
}; 


\node (rect) at (1.25,2.5)[draw, thick, minimum width=1.5cm, minimum height=5cm]{ 
\begin{minipage}{1.5cm}
\parbox{1.5cm}{\begin{center}$D_{i,1}$\end{center}}
\end{minipage}
};

\draw (3.75,5) node[anchor=north]{}
  -- (4.62,5) node[anchor=north]{}
  -- (4.62,5.25) node[anchor=south]{}
  -- cycle;

\node (rect) at (3.75,2.5)[draw=red, thick, minimum width=1.75cm, minimum height=5cm]{
\begin{minipage}{1.55cm}
\parbox{1.55cm}{\begin{center}\textcolor{red}{$T^{m_i}D_{i,1}$}\end{center}}
\end{minipage}
};

\node (rect) at (4.20,4.0)[draw=blue, thick, minimum width=0.85cm, minimum height=2cm, label=right:\textcolor{blue}{$T^{m_i}D_{i,2}$}]{};

 \draw (3.75,3) node[anchor=north]{}
   -- (4.62,3) node[anchor=north]{}
   -- (4.62,2.75) node[anchor=south]{}
   -- cycle;








\end{tikzpicture}
\end{center}
\caption{Half-Rigid Staircase: $m_i \geq \mathcal{H}_{n_i}+L_i$}
\label{fig:verify-mix2}
\end{figure}

\end{document}